\newtheoremstyle{mythm}
{10pt}{10pt}
{\itshape}{}
{\bfseries}{.}{ }
{\thmname{#1}\thmnumber{ #2}\thmnote{ (#3)}}
\theoremstyle{mythm}
\newtheorem{theorem}{Theorem}
\newtheorem{lemma}{Lemma}
\theoremstyle{definition}
\newtheorem{remark}{Remark}
\title{A Non-compact Positivity-Preserving Scheme for Parabolic PDE via Conditional Expectation}
\author{
	Haoran Xu\thanks{School of Mathematical Sciences, Soochow University, Email: 20244007005@stu.suda.edu.cn} \\
	Jie Ren\thanks{Center for Financial Engineering, School of Mathematical Sciences, Soochow University, Email: 731563573@qq.com} \\
	\textbf{Corresponding Author:} Xingye Yue\thanks{Center for Financial Engineering, School of Mathematical Sciences, Soochow University, Email: xyyue@suda.edu.cn (Corresponding Author)}
}
\date{\today}
\begin{document}
	
	\maketitle

	\begin{abstract}
		We propose a novel non-compact, positivity-preserving scheme for linear non-divergence form parabolic equations. Based on the Feynman–Kac formula, the solution is expressed as a conditional expectation of an associated diffusion process. Instead of using compact Markov chain approximations, we employ a wide stencil scheme to approximate the conditional expectation, ensuring consistency and positivity preservation. This method is  effective for anisotropic diffusion with mixed derivatives, where classical schemes often fail,unless the covariance matrix is diagonal dominated.
		
		A key feature of our framework is its robust treatment of boundary conditions, which avoids
		the accuracy loss commonly encountered in BZ and semi-Lagrangian schemes. For Dirichlet boundaries, we introduce (i) a quad-tree non-uniform stopping time scheme with $O(\Delta t^{1/2})$ accuracy and (ii) a quad-tree uniform stopping time scheme with $O(\Delta t)$ accuracy. For Neumann boundaries, we use discrete specular reflection with $O(\Delta t^{1/2})$ convergence, while periodic boundaries are treated using modular wrapping, achieving $O(\Delta t)$ accuracy. All analyses are conducted under the practical scaling $\Delta t \sim h$.
		
		Except for the  uniform stopping time scheme, all schemes are explicit.The schemes are  unconditionally stable and positive preserving, thanks to the probabilistic structure. To make sure the consistency, non-compact stencil is involved, which leads to the large time step constrain $\Delta t \sim h$. Numerical experiments confirm the predicted $L^\infty$ convergence rates for all types of boundary conditions.
	\end{abstract}
	\noindent \textbf{Keywords:}{Expectation-based numerical schemes, Feynman-Kac formula, positivity-preserving property}
	\section{Introduction}\label{sec1}
	The numerical simulation of physical, engineering, and financial systems
	often involves quantities that must remain non-negative, such as
	probability densities, mass concentrations, and option prices.
	While continuous models governing these systems typically preserve
	positivity, assuming non-negative initial and boundary data, maintaining
	this property after discretization presents a significant challenge for
	many numerical methods.
	Positivity is essential not only for numerical accuracy but also for
	fundamental properties such as monotonicity, the comparison principle,
	and the maximum principle.
	For linear problems, positivity and monotonicity are equivalent; however,
	for nonlinear systems, positivity is a necessary, but not sufficient,
	condition for monotonicity.
	In discrete schemes, monotonicity generally ensures stability in the
	maximum norm, which is critical for practical computations.
	For complex nonlinear problems, the theory of viscosity solutions
	provides the theoretical framework, with a key result guaranteeing that
	consistent and monotone schemes will converge to the viscosity solution
	of the continuous problem \cite{bib1}.
	
	In one spatial dimension, standard methods such as finite differences,
	finite elements, and finite volumes can often be tuned to preserve
	positivity through careful mesh ratio selection.
	The situation becomes considerably more difficult in multiple
	dimensions, particularly for problems with anisotropic diffusion
	involving mixed second-order derivative terms.
	It is well-established that for general two-dimensional linear
	anisotropic problems, a linear compact 9-point scheme cannot
	simultaneously achieve consistency and positivity preservation without a
	diagonally dominant diffusion matrix
	\cite{bib11,bib3,bib13,bib12}.
	This fundamental limitation has motivated the development of two primary
	research directions for constructing globally consistent and
	positivity-preserving schemes:
	\begin{enumerate}
		\item \textbf{Nonlinear compact schemes.}
		Introducing nonlinearity into the scheme allows the enforcement of
		positivity while maintaining a compact stencil.
		This approach has been successfully applied in finite difference
		\cite{bib13}, finite element \cite{bib10}, and finite volume methods
		\cite{bib14,bib17,bib18,bib7,bib20,bib21}.
		
		\item \textbf{Non-compact linear schemes.}
		By relaxing the compactness requirement, linear schemes can be
		constructed that are both consistent and positivity-preserving.
		Prominent examples include the BZ method \cite{bib2} and the
		semi-Lagrangian method \cite{bib4}.
		Their core idea is to decompose the diffusion operator into a sum of
		second-order directional derivatives along directions not aligned with
		the grid.
		While this eliminates mixed derivatives, it necessitates the use of
		wide stencils and requires positivity-preserving interpolation for
		off-grid evaluations.
		A major practical challenge lies in boundary treatment: the extended
		directional stencils frequently reach outside the computational domain,
		often forcing extrapolation that can impair accuracy and monotonicity.
		However, in certain financial applications, the specific structure of
		the coefficients and the solution combined with techniques such as
		local coordinate rotation can be exploited to design consistent and
		monotone discretizations that preserve precision at the boundaries, as
		demonstrated, for instance, in \cite{bib24}.
		In practice, such approaches lack general applicability.
	\end{enumerate}
	
	Xu et al.\ \cite{bib16} developed a compact, positivity-preserving linear
	scheme for a class of degenerate Fokker--Planck equations (with
	off-diagonally dominant covariance matrices at degenerate points) by
	exploiting the problem's special structure.
	However, this approach is limited to this specific case and does not work
	for general anisotropic problems without diagonal dominance.
	
	Probabilistic methods provide an alternative approach by using the
	Feynman--Kac formula to express the PDE solution as a conditional
	expectation.
	Classical tree methods in finance \cite{bib5} and Markov chain
	approximations for stochastic control \cite{bib9} belong to this
	category.
	However, these methods typically result in compact stencils equivalent
	to standard finite difference schemes, which face the same diagonal
	dominance restrictions for multi-dimensional anisotropic problems.
	While the explicit semi-Lagrangian method \cite{bib4} can also be viewed
	from a stochastic-process perspective, it struggles with effective
	boundary handling.
	In addition, other works have employed probabilistic representations for
	high-dimensional nonlinear PDEs \cite{bib6,bib8,bib22}.
	
	\medskip
	\noindent\textbf{Our contribution.}
	This paper presents a novel non-compact numerical framework for linear
	non-divergence form parabolic equations that preserves positivity and
	ensures consistency for general anisotropic problems with mixed
	derivatives.
	Based on the Feynman--Kac formula, the method combines a wide stencil
	approximation with a positivity-preserving interpolation scheme.
	Extending our previous one-dimensional work \cite{bib23}, we develop a
	complete two-dimensional framework, including algorithmic details and a
	rigorous convergence analysis.
	Unlike previous non-compact approaches—such as the BZ method \cite{bib2}
	and the semi-Lagrangian method \cite{bib4}—which often experience
	accuracy loss near boundaries, our method naturally incorporates
	tailored treatments for Dirichlet, Neumann, and periodic conditions
	directly within the expectation-based update.
	This leads to unconditionally $L^\infty$ stable schemes that handle
	regions near the boundary accurately without resorting to extrapolation.
	To ensure consistency, a non-compact stencil is involved, leading to a
	large time step constraint $\Delta t \sim h$.
	Theoretically, we establish convergence results with $L^\infty$ error
	estimates of order $O(\Delta t^{1/2})$ or $O(\Delta t)$.
	Under the scaling $\Delta t \sim h$, the uniform stopping time scheme
	achieves an $L^\infty$ error of $O(\Delta t)$, while all other schemes
	yield $O(\Delta t^{1/2})$.
	Moreover, with the exception of the uniform stopping time scheme, all
	schemes are fully explicit.
	Numerical experiments confirm the predicted convergence rates and
	demonstrate that our method accurately resolves solutions near Dirichlet
	boundaries.
	In contrast, the explicit LISL scheme (the semi-Lagrangian method with
	linear interpolation \cite{bib4}) exhibits accuracy degradation in the
	regions near the boundary.
	
	\medskip
	The paper is structured as follows.
	Section~\ref{sec2} presents the schemes, stability, and convergence
	analysis.
	Section~\ref{sem} compares it with the semi-Lagrangian method.
	Section~\ref{sec4} provides numerical experiments to validate the
	convergence rates and demonstrate the robustness of the method.
	Finally, Section~\ref{sec5} concludes the paper.
	\section{Numerical Scheme}
	\label{sec2}
	
	\subsection{First-Type (Dirichlet) Boundary Conditions}
	\label{subsec:first-type}
	
	We consider the backward parabolic problem
	\begin{equation}\label{eq:PDE-backward}
		\begin{cases}
			\partial_t f + \dfrac{1}{2}\,\mathrm{Tr}\big( A A^\top D^2 f \big) + B\cdot\nabla f - r(x,y,t)\,f = 0,
			& (x,y)\in\Omega,\; t\in[0,T),\\[2pt]
			f(x,y,T) = \varphi(x,y), & (x,y)\in\Omega,\\[1pt]
			f\big|_{\partial\Omega} = f_{\partial}(x,y,t), & t\in[0,T),
		\end{cases}
	\end{equation}
	on the rectangular spatial domain $\Omega=(x_0,x_{M_1})\times (y_0,y_{M_2})$.
	The diffusion tensor and convection vector are
	\[
	A A^\top =
	\begin{pmatrix}
		\sigma_1^2 & \rho\,\sigma_1\sigma_2 \\[1pt]
		\rho\,\sigma_1\sigma_2 & \sigma_2^2
	\end{pmatrix},\qquad
	B=(b_1,b_2)^\top,
	\]
	and the Hessian and gradient are
	\[
	D^2 f = 
	\begin{pmatrix} 
		\partial_{xx} f & \partial_{xy} f \\ 
		\partial_{yx} f & \partial_{yy} f 
	\end{pmatrix}, \quad 
	\nabla f = 
	\begin{pmatrix} 
		\partial_x f \\ 
		\partial_y f 
	\end{pmatrix}.
	\]
	
	We assume $\rho,\sigma_1,\sigma_2,r,b_1,b_2$ are continuous on $\Omega\times[0,T)$ with 
	$\rho\in[-1,1]$, $\sigma_1>0$, $\sigma_2>0$, and $r\ge0$.
	A convenient factorization is
	\[
	A =
	\begin{pmatrix}
		\sigma_1\cos\theta & \sigma_1\sin\theta\\[1pt]
		\sigma_2\sin\theta & \sigma_2\cos\theta
	\end{pmatrix}, \qquad
	\theta = \dfrac12 \arcsin \rho \in \Big[-\dfrac{\pi}{4}, \dfrac{\pi}{4}\Big].
	\]
	
	Introduce the uniform grid
	\[
	\begin{gathered}
		x_i = x_0 + i h_1, \quad i=0,\dots,M_1,\quad h_1 = \dfrac{x_{M_1}-x_0}{M_1},\\
		y_j = y_0 + j h_2, \quad j=0,\dots,M_2,\quad h_2 = \dfrac{y_{M_2}-y_0}{M_2},\\
		t_n = n\Delta t, \quad n=0,\dots,N,\qquad \Delta t = \dfrac{T}{N},
	\end{gathered}
	\]
	and denote the discrete approximation by $f_{h,i,j}^n \approx f(x_i,y_j,t_n)$.
	
	By the two-dimensional Feynman--Kac representation, for $t_n \le t \le t_{n+1}$,
	\begin{equation}\label{eq:Feynman-Kac}
		f(x_i,y_j,t_n)
		= \mathbb{E}\Bigg[ 
		\exp\Big(-\int_{t_n}^{\tau} r(X_s,s)\,\mathrm{d}s\Big) f(X_{\tau},Y_{\tau},\tau)
		\;\Big|\; X_{t_n}=x_i,\, Y_{t_n}=y_j
		\Bigg],
	\end{equation}
	where $(X_s,Y_s)$ solves the SDE system
	\begin{equation}\label{eq:SDE-system}
		\begin{aligned}
			\mathrm{d}X(s) &= b_1(X_s,Y_s,s)\,\mathrm{d}s 
			+ \sigma_1(X_s,Y_s,s)\cos\theta(X_s,Y_s,s)\,\mathrm{d}W_1(s)
			+ \sigma_1(X_s,Y_s,s)\sin\theta(X_s,Y_s,s)\,\mathrm{d}W_2(s),\\
			\mathrm{d}Y(s) &= b_2(X_s,Y_s,s)\,\mathrm{d}s 
			+ \sigma_2(X_s,Y_s,s)\sin\theta(X_s,Y_s,s)\,\mathrm{d}W_1(s)
			+ \sigma_2(X_s,Y_s,s)\cos\theta(X_s,Y_s,s)\,\mathrm{d}W_2(s),
		\end{aligned}
	\end{equation}
	with independent standard Brownian motions $W_1,W_2$, and the stopping time
	\begin{equation}\label{eq:tau-def}
		\tau = t_{n+1} \wedge \inf\{\,s\ge t_n:\ (X_s,Y_s)\notin\Omega\,\}.
	\end{equation}
	
	At node $(x_i,y_j,t_n)$, define
	\[
	\begin{aligned}
		b_{k,i,j}^n &= b_k(x_i,y_j,t_n),       & \sigma_{k,i,j}^n &= \sigma_k(x_i,y_j,t_n), & \theta_{i,j}^n &= \theta(x_i,y_j,t_n), \\
		\alpha_{i,j}^n &= \sin\theta_{i,j}^n + \cos\theta_{i,j}^n, & 
		\beta_{i,j}^n &= \sin\theta_{i,j}^n - \cos\theta_{i,j}^n, \\
		r_{i,j}^n &= r(x_i,y_j,t_n),          & \rho_{i,j}^n &= \rho(x_i,y_j,t_n).
	\end{aligned}
	\]
	
	Then we have 
	\[
	\alpha_{i,j}^n \in [0,\sqrt2],\quad 
	\beta_{i,j}^n \in [-\sqrt2,0],\quad
	(\alpha_{i,j}^n)^2+(\beta_{i,j}^n)^2=2,\quad
	(\alpha_{i,j}^n)^2-(\beta_{i,j}^n)^2=2\rho_{i,j}^n.
	\]
	
	To approximate \eqref{eq:Feynman-Kac}, integrate \eqref{eq:SDE-system} from $t_n$ to $t$ and apply a first-order approximation:
	\begin{equation}\label{eq:process-approx}
		\begin{aligned}
			X_t &\approx x_i + b_{1,i,j}^n (t-t_n)
			+ \sigma_{1,i,j}^n \cos\theta_{i,j}^n \sqrt{t-t_n}\,\xi_1
			+ \sigma_{1,i,j}^n \sin\theta_{i,j}^n \sqrt{t-t_n}\,\xi_2,\\
			Y_t &\approx y_j + b_{2,i,j}^n (t-t_n)
			+ \sigma_{2,i,j}^n \sin\theta_{i,j}^n \sqrt{t-t_n}\,\xi_1
			+ \sigma_{2,i,j}^n \cos\theta_{i,j}^n \sqrt{t-t_n}\,\xi_2,
		\end{aligned}
	\end{equation}
	where $\xi_1$ and $\xi_2$ are standard normal random variables.  
	We replace each $\xi$ by an independent Rademacher variable
	\[
	\xi^h =
	\begin{cases}
		-1,& \text{w.p. } 1/2,\\[1pt]
		+1,& \text{w.p. } 1/2,
	\end{cases}
	\]
	so that the discrete process admits four equiprobable branches at time $t$:
	\[
	(X_t^h,Y_t^h) =
	\begin{cases}
		\big(x_i + b_{1,i,j}^n (t-t_n) + \alpha_{i,j}^n\sigma_{1,i,j}^n \sqrt{t-t_n},\ 
		y_j + b_{2,i,j}^n (t-t_n) + \alpha_{i,j}^n\sigma_{2,i,j}^n \sqrt{t-t_n}\big),\\[1pt]
		\big(x_i + b_{1,i,j}^n (t-t_n) - \beta_{i,j}^n\sigma_{1,i,j}^n \sqrt{t-t_n},\ 
		y_j + b_{2,i,j}^n (t-t_n) + \beta_{i,j}^n\sigma_{2,i,j}^n \sqrt{t-t_n}\big),\\[1pt]
		\big(x_i + b_{1,i,j}^n (t-t_n) - \alpha_{i,j}^n\sigma_{1,i,j}^n \sqrt{t-t_n},\ 
		y_j + b_{2,i,j}^n (t-t_n) - \alpha_{i,j}^n\sigma_{2,i,j}^n \sqrt{t-t_n}\big),\\[1pt]
		\big(x_i + b_{1,i,j}^n (t-t_n) + \beta_{i,j}^n\sigma_{1,i,j}^n \sqrt{t-t_n},\ 
		y_j + b_{2,i,j}^n (t-t_n) - \beta_{i,j}^n\sigma_{2,i,j}^n \sqrt{t-t_n}\big).
	\end{cases}
	\]
	
	Define the discrete stopping time
	\[
	\tau^h = t_{n+1} \wedge \inf\{\,s\ge t_n:\ (X_s^h,Y_s^h)\notin\Omega\,\}.
	\]
	As there are four discrete branches, denote the possible realizations by $\tau_k$, $k=1,\dots,4$, and let $\hat{\tau}_k = \tau_k - t_n$.
	
	A natural approximation of \eqref{eq:Feynman-Kac} is
	\begin{equation}\label{eq:naive-approx}
		\hat f_{h,i,j}^n
		= \mathbb{E}^h \Bigg[\frac{f(X_{\tau^h}^{h}, Y_{\tau^h}^{h}, \tau^h)}{1 + r_{i,j}^n \hat\tau^h}\Bigg]
		= \sum_{k=1}^4 \frac{1}{4 (1 + r_{i,j}^n \hat\tau_k)} 
		f(X_{\tau_k}^{h,k}, Y_{\tau_k}^{h,k}, \tau_k).
	\end{equation}
	Equation \eqref{eq:naive-approx} may suffer boundary-induced loss of accuracy.
	
	\begin{remark}
		As an illustrative example, consider $b_1=b_2=0$, $\sigma_1=\sigma_2=\rho=1$, and $r=0$.  
		For a grid point $(x_k, y_{k'})$, the corresponding four branch points are
		\[
		\begin{aligned}
			(x_{\tau_1}^{h,1}, y_{\tau_1}^{h,1}) 
			&= (x_k + \sqrt{2\tau_1},\; y_{k'} + \sqrt{2\tau_1}),\\
			(x_{\Delta t}^{h,2}, y_{\Delta t}^{h,2}) 
			&= (x_k,\, y_{k'}),\\
			(x_{\Delta t}^{h,3}, y_{\Delta t}^{h,3}) 
			&= (x_k - \sqrt{2\Delta t},\; y_{k'} - \sqrt{2\Delta t}),\\
			(x_{\Delta t}^{h,4}, y_{\Delta t}^{h,4}) 
			&= (x_k,\, y_{k'}).
		\end{aligned}
		\]
	\end{remark}
	To analyze the consistency, we assume that the function $f$ is sufficiently smooth. Then the scheme yields
	\[
	\begin{aligned}
		\hat f_{h,k,k'}^n - f(x_k, y_{k'}, t_n)
		&= \tfrac14\,f(x_{\tau_1}^{h,1}, y_{\tau_1}^{h,1}, \tau_1)
		+ \tfrac14\,f(x_{\Delta t}^{h,2}, y_{\Delta t}^{h,2}, t_{n+1})  \\
		&\quad+ \tfrac14\,f(x_{\Delta t}^{h,3}, y_{\Delta t}^{h,3}, t_{n+1})
		+ \tfrac14\,f(x_{\Delta t}^{h,4}, y_{\Delta t}^{h,4}, t_{n+1})
		- f(x_k, y_{k'}, t_n).
	\end{aligned}
	\]
	
	Applying Taylor expansion at each evaluation point and substituting back, we obtain the local error estimate
	\[
	\bigl| \hat f_{h,k,k'} - f(x_k, y_{k'}, t_n) \bigr|
	= \mathcal{O}(\sqrt{\Delta t}-\sqrt{\tau_1}).
	\]
	
	If $(x_k, y_{k'})$ is very close to the boundary, $\tau_1$ may reduce to the scale $O(\Delta t^2)$. Consequently, the local truncation error remains of order $\mathcal{O}(\sqrt{\Delta t})$, while the global error becomes $\mathcal{O}(1/\sqrt{\Delta t})$. Thus, such a scheme inevitably suffers from accuracy degradation near the boundary.
	
	A key observation is that although each branch is selected with equal probability $1/4$, the probabilities of the payoffs corresponding to the stopping times of each branch are generally not identical. Intuitively, branches with shorter stopping times reach the boundary earlier and thus exert a stronger influence on the solution at the current grid point. Therefore, it is natural to assign higher probabilities to those with shorter stopping times. This allows the scheme to adaptively and accurately capture boundary effects.
	
	Based on this consideration, we rewrite the scheme \eqref{eq:naive-approx} as
	\begin{equation}\label{eq:weighted-scheme}
		\hat f_{h,i,j}^n
		= \sum_{k=1}^4 \frac{\omega_k}{\,1+r_{i,j}^n\hat\tau_k\,}\,
		f(X_{\tau_k}^{h,k}, Y_{\tau_k}^{h,k}, \tau_k),
	\end{equation}
	where the probabilities $\{\omega_k\}$ are defined by
	\begin{equation}\label{eq:omega}
		\begin{aligned}
			\omega_1 &= \frac{\sqrt{\hat\tau_2 \hat\tau_3 \hat\tau_4}}
			{(\sqrt{\hat\tau_1}+\sqrt{\hat\tau_3})(\sqrt{\hat\tau_1 \hat\tau_3}+\sqrt{\hat\tau_2 \hat\tau_4})},\\
			\omega_2 &= \frac{\sqrt{\hat\tau_1 \hat\tau_3 \hat\tau_4}}
			{(\sqrt{\hat\tau_2}+\sqrt{\hat\tau_4})(\sqrt{\hat\tau_1 \hat\tau_3}+\sqrt{\hat\tau_2 \hat\tau_4})},\\
			\omega_3 &= \frac{\sqrt{\hat\tau_1 \hat\tau_2 \hat\tau_4}}
			{(\sqrt{\hat\tau_1}+\sqrt{\hat\tau_3})(\sqrt{\hat\tau_1 \hat\tau_3}+\sqrt{\hat\tau_2 \hat\tau_4})},\\
			\omega_4 &= \frac{\sqrt{\hat\tau_1 \hat\tau_2 \hat\tau_3}}
			{(\sqrt{\hat\tau_2}+\sqrt{\hat\tau_4})(\sqrt{\hat\tau_1 \hat\tau_3}+\sqrt{\hat\tau_2 \hat\tau_4})}.
		\end{aligned}
	\end{equation}
	
	This choice of $\omega_k$ ensures the consistency of the scheme, as will be seen in the proof of Lemma~\ref{a}. A direct algebraic check shows that $\omega_k \ge 0$ and $\sum_{k=1}^4 \omega_k = 1$. Moreover, we observe that
	\[
	\begin{aligned}
		&\text{If } \tau_1 = \tau_2 = \tau_3 = \tau_4 (= \Delta t),
		\text{ then } \omega_1 = \omega_2 = \omega_3 = \omega_4 = \tfrac14,\\
		&\text{If } \tau_1 = \Delta t^2, \; \tau_2 = \tau_3 = \tau_4 = \Delta t, \text{ then}\\
		&\qquad \omega_1 = \frac{1}{(\!1+\sqrt{\Delta t}\!)^2},\quad
		\omega_2 = \frac{\sqrt{\Delta t}}{2(\!1+\sqrt{\Delta t}\!)},\\
		&\qquad \omega_3 = \frac{\sqrt{\Delta t}}{(\!1+\sqrt{\Delta t}\!)^2},\quad
		\omega_4 = \frac{\sqrt{\Delta t}}{2(\!1+\sqrt{\Delta t}\!)}.
	\end{aligned}
	\]
	It can be seen that branches with smaller stopping times receive larger probabilities, while branches with equal stopping times are assigned equal probabilities, which is the desired property.
	
	\begin{lemma}\label{a}
		Assume that the exact solution $f$ is sufficiently smooth. Let $\hat f_{h,i,j}^n$ be the numerical approximation obtained from the weighted scheme \eqref{eq:weighted-scheme}. Then there exists a constant $C>0$ independent of $\Delta t$ such that
		\[
		\bigl|\hat f_{h,i,j}^n - f(x_i, y_j, t_n)\bigr| \;\le\; C\,\Delta t^{3/2}.
		\]
	\end{lemma}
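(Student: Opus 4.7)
The plan is to Taylor-expand each of the four branch contributions in \eqref{eq:weighted-scheme} around $(x_i,y_j,t_n)$ and then exploit algebraic identities built into the weights $\omega_k$ to show that all terms of order strictly less than $\Delta t^{3/2}$ either cancel pairwise or collapse into the PDE operator applied to $f$, which is zero. What survives is a genuine Taylor tail of size $O(\Delta t^{3/2})$.

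I would begin by writing the branch displacements in a unified form. For each $k$, let $(\epsilon_k,\eta_k)$ denote the signed amplitudes read from \eqref{eq:process-approx}, namely $(\alpha,\alpha)$, $(-\beta,\beta)$, $(-\alpha,-\alpha)$, $(\beta,-\beta)$ for $k=1,\dots,4$, with $\alpha=\alpha_{i,j}^n$, $\beta=\beta_{i,j}^n$, so that
\[
X^{h,k}_{\tau_k}-x_i=b_{1,i,j}^n\hat\tau_k+\sigma_{1,i,j}^n\epsilon_k\sqrt{\hat\tau_k},\qquad
Y^{h,k}_{\tau_k}-y_j=b_{2,i,j}^n\hat\tau_k+\sigma_{2,i,j}^n\eta_k\sqrt{\hat\tau_k}.
\]
A second-order spatial Taylor expansion of $f$, a first-order expansion in time, and $1/(1+r_{i,j}^n\hat\tau_k)=1-r_{i,j}^n\hat\tau_k+O(\hat\tau_k^2)$ then give, uniformly in $k$,
\[
\frac{f(X^{h,k}_{\tau_k},Y^{h,k}_{\tau_k},\tau_k)}{1+r_{i,j}^n\hat\tau_k}
=f+\sqrt{\hat\tau_k}\bigl(\sigma_1\epsilon_k\partial_x f+\sigma_2\eta_k\partial_y f\bigr)+\hat\tau_k\,\mathcal{L}_k f+O(\hat\tau_k^{3/2}),
\]
where all derivatives are evaluated at $(x_i,y_j,t_n)$ and $\mathcal{L}_k f := \partial_t f+b_1\partial_x f+b_2\partial_y f+\tfrac12(\sigma_1\epsilon_k\partial_x+\sigma_2\eta_k\partial_y)^2 f-r f$.

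Next I would establish, with $s_k=\sqrt{\hat\tau_k}$ and $\Delta t^\ast := 2s_1s_2s_3s_4/(s_1s_3+s_2s_4)$, the moment identities
\[
\sum_k\omega_k=1,\qquad \sum_k\omega_k s_k\epsilon_k=\sum_k\omega_k s_k\eta_k=0,\qquad \sum_k\omega_k\hat\tau_k=\Delta t^\ast,
\]
\[
\sum_k\omega_k\epsilon_k^2\hat\tau_k=\sum_k\omega_k\eta_k^2\hat\tau_k=\Delta t^\ast,\qquad \sum_k\omega_k\epsilon_k\eta_k\hat\tau_k=\rho_{i,j}^n\Delta t^\ast.
\]
The key structural fact, direct from \eqref{eq:omega}, is the pairing symmetry $\omega_1 s_1=\omega_3 s_3$ and $\omega_2 s_2=\omega_4 s_4$; combined with the sign flips $(\epsilon_k,\eta_k)\mapsto-(\epsilon_k,\eta_k)$ within the pairs $\{1,3\}$ and $\{2,4\}$, this immediately yields the half-power cancellations and reduces the $\hat\tau_k$-moments to $(\alpha^2\pm\beta^2)\cdot s_1s_2s_3s_4/(s_1s_3+s_2s_4)$, which are evaluated using $\alpha^2+\beta^2=2$ and $\alpha^2-\beta^2=2\rho_{i,j}^n$. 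Substituting into the branchwise expansion and summing, the $\sqrt{\hat\tau_k}$-terms vanish, and the $\hat\tau_k$-terms collapse into $\Delta t^\ast\bigl[\partial_t f+\tfrac12\mathrm{Tr}(AA^\top D^2 f)+B\cdot\nabla f-rf\bigr](x_i,y_j,t_n)=0$ by \eqref{eq:PDE-backward}. The residual is bounded by $\sum_k\omega_k\cdot O(\hat\tau_k^{3/2})\le C\Delta t^{3/2}$, using $\omega_k\le 1$ and $\hat\tau_k\le\Delta t$.

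The main obstacle is the mixed-moment identity $\sum_k\omega_k\epsilon_k\eta_k\hat\tau_k=\rho_{i,j}^n\Delta t^\ast$: this is the only step genuinely dependent on the specific algebraic form of \eqref{eq:omega}, and it is what forces the correlation $\rho$ to appear in exactly the right position to reproduce the mixed derivative $\rho\sigma_1\sigma_2\partial_{xy}f$. Everything else is Taylor bookkeeping; the constant $C$ is controlled by a uniform bound on the third-order spatial and second-order temporal derivatives of $f$ over $\overline\Omega\times[0,T]$.
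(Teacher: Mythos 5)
Your proposal is correct and follows essentially the same route as the paper's proof: Taylor expansion of each branch, moment-matching identities satisfied by the weights $\omega_k$ (your pairing symmetry $\omega_1\sqrt{\hat\tau_1}=\omega_3\sqrt{\hat\tau_3}$, $\omega_2\sqrt{\hat\tau_2}=\omega_4\sqrt{\hat\tau_4}$ is exactly what underlies the paper's listed conditions, and your $\Delta t^\ast$ is the paper's $A_1$), and collapse of the surviving first-order terms into the PDE operator, which vanishes. The only difference is presentational: you make explicit the algebraic mechanism behind the moment identities that the paper merely asserts follow from \eqref{eq:omega}.
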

	
	\begin{proof}
		We begin by defining the quadrature error term $I$ as
		\[
		\begin{aligned}
			I &= \hat f_{h,i,j}^n - f(x_i, y_j, t_n) \\
			&= \sum_{k=1}^4 \frac{\omega_k}{\,1 + r_{i,j}^n\hat{\tau}_k\,}\,
			f(X_{\tau_k}^{h,k}, Y_{\tau_k}^{h,k}, \tau_k)
			- f(x_i, y_j, t_n)  \\
			&= \Bigl[ A_1\,\partial_t f + A_2\,\partial_{xx}^2 f + A_3\,\partial_{xy}^2 f
			+ A_4\,\partial_{yy}^2 f + A_5\,\partial_x f + A_6\,\partial_y f + A_7\,f \Bigr]_{(x_i,y_j)}^{t_n}
			+ O(\Delta t^{3/2}),
		\end{aligned}
		\]
		where the coefficients $A_1,\dots,A_7$ are explicit algebraic combinations of the probabilities $\{\omega_k\}$, the time increments $\{\hat\tau_k\}$, and the local coefficients:
		\[
		\begin{aligned}
			A_1 &= \omega_1\,\hat{\tau}_1 + \omega_2\,\hat{\tau}_2
			+ \omega_3\,\hat{\tau}_3 + \omega_4\,\hat{\tau}_4,\\
			A_2 &= \tfrac{(\sigma_{1,i,j}^n)^2}{2}\,A_1 + \tfrac{\rho_{i,j}^n(\sigma_{1,i,j}^n)^2}{2}\,
			(\omega_1\,\hat{\tau}_1 - \omega_2\,\hat{\tau}_2 + \omega_3\,\hat{\tau}_3 - \omega_4\,\hat{\tau}_4),\\
			A_3 &= \rho_{i,j}^n\sigma_{1,i,j}^n\sigma_{2,i,j}^n\,A_1
			+ \sigma_{1,i,j}^n\sigma_{2,i,j}^n\,
			(\omega_1\,\hat{\tau}_1 - \omega_2\,\hat{\tau}_2 + \omega_3\,\hat{\tau}_3 - \omega_4\,\hat{\tau}_4),\\
			A_4 &= \tfrac{(\sigma_{2,i,j}^n)^2}{2}\,A_1 + \tfrac{\rho_{i,j}^n(\sigma_{2,i,j}^n)^2}{2}\,
			(\omega_1\,\hat{\tau}_1 - \omega_2\,\hat{\tau}_2 + \omega_3\,\hat{\tau}_3 - \omega_4\,\hat{\tau}_4),\\
			A_5 &= b_{1,i,j}^n\,A_1 + \omega_1\,\alpha_{i,j}^n\,\sigma_{1,i,j}^n\,\sqrt{\hat\tau_1}
			- \omega_2\,\beta_{i,j}^n\,\sigma_{1,i,j}^n\,\sqrt{\hat\tau_2}\\
			&\quad - \omega_3\,\alpha_{i,j}^n\,\sigma_{1,i,j}^n\,\sqrt{\hat\tau_3}
			+ \omega_4\,\beta_{i,j}^n\,\sigma_{1,i,j}^n\,\sqrt{\hat\tau_4},\\
			A_6 &= b_{2,i,j}^n\,A_1 + \omega_1\,\alpha_{i,j}^n\,\sigma_{2,i,j}^n\,\sqrt{\hat\tau_1}
			+ \omega_2\,\beta_{i,j}^n\,\sigma_{2,i,j}^n\,\sqrt{\hat\tau_2}\\
			&\quad - \omega_3\,\alpha_{i,j}^n\,\sigma_{2,i,j}^n\,\sqrt{\hat\tau_3}
			- \omega_4\,\beta_{i,j}^n\,\sigma_{2,i,j}^n\,\sqrt{\hat\tau_4},\\
			A_7 &= -\,r_{i,j}^n\,A_1.
		\end{aligned}
		\]
		
		From \eqref{eq:omega}, it follows that the probabilities $\{\omega_k\}$ satisfy the moment-matching conditions:
		\[
		\begin{aligned}
			\omega_1 + \omega_2 + \omega_3 + \omega_4 &= 1,\\
			\omega_1\,\hat{\tau}_1 - \omega_2\,\hat{\tau}_2
			+ \omega_3\,\hat{\tau}_3 - \omega_4\,\hat{\tau}_4 &= 0,\\
			\omega_1\,\alpha_{i,j}^n\,\sigma_{1,i,j}^n\,\sqrt{\hat\tau_1}
			- \omega_2\,\beta_{i,j}^n\,\sigma_{1,i,j}^n\,\sqrt{\hat\tau_2}
			- \omega_3\,\alpha_{i,j}^n\,\sigma_{1,i,j}^n\,\sqrt{\hat\tau_3}
			+ \omega_4\,\beta_{i,j}^n\,\sigma_{1,i,j}^n\,\sqrt{\hat\tau_4} &= 0,\\
			\omega_1\,\alpha_{i,j}^n\,\sigma_{2,i,j}^n\,\sqrt{\hat\tau_1}
			+ \omega_2\,\beta_{i,j}^n\,\sigma_{2,i,j}^n\,\sqrt{\hat\tau_2}
			- \omega_3\,\alpha_{i,j}^n\,\sigma_{2,i,j}^n\,\sqrt{\hat\tau_3}
			- \omega_4\,\beta_{i,j}^n\,\sigma_{2,i,j}^n\,\sqrt{\hat\tau_4} &= 0.
		\end{aligned}
		\]
		
		Substituting these identities into the expressions for $A_2$--$A_7$, we conclude that
		\[
		\begin{aligned}
			A_2 &= \tfrac{(\sigma_{1,i,j}^n)^2}{2}\,A_1,\qquad 
			A_3 = \rho_{i,j}^n\,\sigma_{1,i,j}^n\,\sigma_{2,i,j}^n\,A_1,\qquad
			A_4 = \tfrac{(\sigma_{2,i,j}^n)^2}{2}\,A_1,\\
			A_5 &= b_{1,i,j}^n\,A_1,\qquad 
			A_6 = b_{2,i,j}^n\,A_1,\qquad 
			A_7 = -\,r_{i,j}^n\,A_1.
		\end{aligned}
		\]
		
		Therefore, the error term simplifies to
		\[
		I = A_1\Bigl[\partial_t f + \tfrac{\sigma_1^2}{2}\,\partial_{xx}^2 f
		+ \rho\,\sigma_1\,\sigma_2\,\partial_{xy}^2 f 
		+ \tfrac{\sigma_2^2}{2}\,\partial_{yy}^2 f
		+ b_1\,\partial_x f + b_2\,\partial_y f - r\,f\Bigr]_{(x_i,y_j)}^{t_n}
		+ O(\Delta t^{3/2}).
		\]
		
		Since $f$ satisfies the original PDE \eqref{eq:PDE-backward}, the bracketed expression vanishes. Hence
		\[
		I = O(\Delta t^{3/2}),
		\]
		which completes the proof.
	\end{proof}
	In practice the scheme may require values at off-grid points at time level $t_{n+1}$; these are obtained by positivity-preserving interpolation from grid values. We introduce the following operators (used consistently below):
	
	\begin{itemize}
		\item $\mathcal{L}^{\mathrm{sp}}$: spatial bilinear interpolation (uses four spatial neighbors at a single time level);
		\item $\mathcal{L}^{\mathrm{st}}$: space--time trilinear interpolation (uses eight neighbors: four in space at $t_n$ and four in space at $t_{n+1}$).
	\end{itemize}
	
	Both operators are convex combinations of grid values when the evaluation point lies in the interior of a cell; near domain boundaries the interpolation stencils are adapted by replacing missing grid values with boundary data $f_{\partial}$.
	
	The practical update then reads
	\begin{equation}\label{eq:practical}
		f_{h,i,j}^{n}
		= \sum_{k=1}^4 \frac{\omega_k}{1+r_{i,j}^n\hat\tau_k}\;U(X_{\tau_k}^{h,k},Y_{\tau_k}^{h,k},\tau_k),
	\end{equation}
	with
	\[
	U(X_{\tau_k}^{h,k},Y_{\tau_k}^{h,k},\tau_k)=
	\begin{cases}
		\mathcal{L}^{\mathrm{sp}}f_h(X_{t_{n+1}}^{h,k},Y_{t_{n+1}}^{h,k},t_{n+1}), & (X_{t_{n+1}}^{h,k},Y_{t_{n+1}}^{h,k})\in\Omega,\\[2mm]
		f_{\partial}(X_{\tau_k}^{h,k},Y_{\tau_k}^{h,k},\tau_k), & (X_{\tau_k}^{h,k},Y_{\tau_k}^{h,k})\in\partial\Omega.
	\end{cases}
	\]
	
	Since both the linear interpolation operator and the mathematical expectation operator are positivity-preserving and linear, the resulting scheme is inherently linear and positivity-preserving. Moreover, as both linear interpolation and the computation of discrete mathematical expectations are based on nonnegative linear combinations whose coefficients sum to 1, the scheme is unconditionally stable in the $L^\infty$ norm.
	
	A key issue for the scheme is its consistency. By the Lax Equivalence Theorem, stability alone is insufficient; convergence is guaranteed only when consistency also holds. Unlike classical explicit finite difference schemes, which require CFL-type mesh-ratio constraints to maintain stability, our explicit construction is unconditionally stable. Establishing its consistency therefore becomes the main concern.
	
	When attempting to realize the scheme in a compact manner—so that each branch displacement remains within a single cell—one must enforce CFL-like restrictions of the form
	\[
	|\alpha\sigma_i\sqrt{\Delta t}| \le h_i,\qquad
	|\beta\sigma_i\sqrt{\Delta t}| \le h_i,\qquad i=1,2.
	\]
	Under these constraints, substituting the exact solution into the update and applying Lemma~\ref{a} yields a local truncation error of
	\[
	R_{\mathrm{loc}} = O(h_1^2+h_2^2) + O(\Delta t^{3/2}),
	\]
	and hence a global error of
	\[
	R_{\mathrm{glob}}
	= O\Big(\Delta t^{1/2} + \frac{h_1^2+h_2^2}{\Delta t}\Big).
	\]
	
	This expression makes the difficulty explicit: since bilinear interpolation is only second-order accurate, compact realizations generally cannot simultaneously satisfy the compatibility requirement and preserve positivity. In particular, the interpolation error vanishes only when the branch endpoints lie exactly on grid nodes. In other cases, the branch endpoints fall between grid points, producing a nonzero interpolation error that renders the scheme inconsistent.
	
	To avoid this incompatibility, we abandon compactness and adopt a non-compact scaling in which the time step is chosen on the order of the spatial mesh size,
	\[
	\Delta t \sim h_1 \sim h_2.
	\]
	In particular, taking
	\[
	\Delta t = h_1 = h_2
	\]
	ensures that the update formula remains positivity-preserving, linear, and fully compatible without imposing any CFL-type restriction. 
	
	\begin{remark}
		The global consistency error of the scheme is of the order $O\left(\Delta t^{1/2}+\frac{h^2}{\Delta t}\right)$. If we adopt the scaling relation $\Delta t \sim h^{4/3}$, the theoretical consistency error can be improved to $O(h^{2/3})$, which is marginally superior to the error order $O(\Delta t^{1/2}) = O(h^{1/2})$ under the scaling $\Delta t \sim h$. However, this scaling would result in a smaller time step, thereby compromising computational efficiency. Moreover, numerical experiments presented in Section \ref{sec4} demonstrate that the scheme with $\Delta t \sim h$ achieves nearly first-order convergence in the $L^\infty$ norm. Therefore, we select the scaling $\Delta t \sim h$ to balance accuracy performance and computational efficiency.
	\end{remark}
	
	\medskip
	\noindent
	\textbf{Algorithm 1(non-uniform stopping-time scheme \eqref{eq:practical}):}
	\begin{enumerate}
		\item \textbf{Initialization:} Set $f_{h,i,j}^N = \varphi(x_i, y_j)$ for all grid nodes.
		\item For $n = N-1, \dots, 0$:
		\begin{enumerate}
			\item For all interior grid nodes $(i,j)$:
			\begin{enumerate}
				\item Compute the four candidate branch endpoints and their stopping times $\{\tau_k\}_{k=1}^4$.
				\item Compute the probabilities $\{\omega_k\}_{k=1}^4$ using \eqref{eq:omega}.
				\item For each $k=1,\dots,4$:
				\begin{enumerate}
					\item If $\tau_k = t_{n+1}$ and $(X_{t_{n+1}}^{h,k}, Y_{t_{n+1}}^{h,k}) \in \Omega$, evaluate $U(X_{t_{n+1}}^{h,k}, Y_{t_{n+1}}^{h,k}, t_{n+1})$ via $\mathcal{L}^{\mathrm{sp}} f_h$.
					\item Else, evaluate $U(X_{\tau_k}^{h,k}, Y_{\tau_k}^{h,k}, \tau_k)$ as $f_\partial$.
				\end{enumerate}
				\item Update $f_{h,i,j}^n = \sum_{k=1}^4 \dfrac{\omega_k}{1 + r_{i,j}^n \hat{\tau}_k} \cdot U(X_{\tau_k}^{h,k}, Y_{\tau_k}^{h,k}, \tau_k)$.
			\end{enumerate}
		\end{enumerate}
	\end{enumerate}
	
	\begin{figure}[htbp]
		\centering
		\includegraphics[width=0.8\linewidth]{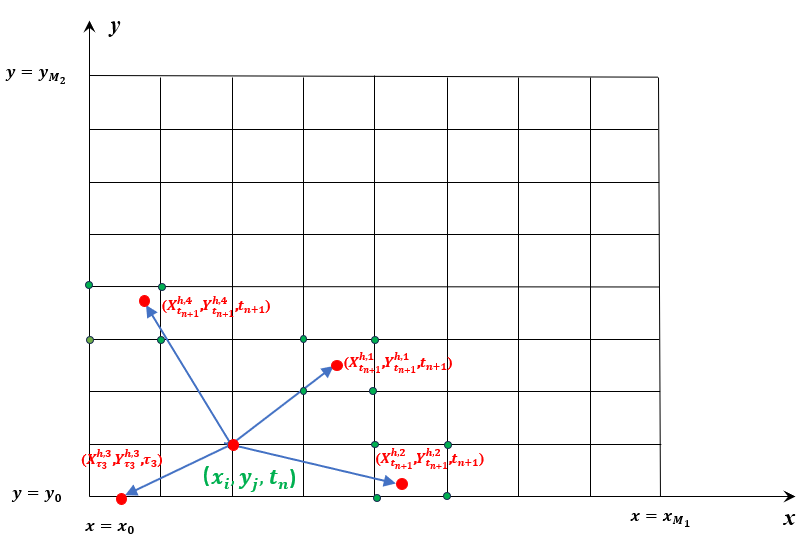}
		\caption{Illustration of the above algorithm, where $\tau_1 = \tau_2 = \tau_4 = t_{n+1}$ and $\tau_3 < t_{n+1}$.}
		\label{fig1}
	\end{figure}
	
	\begin{theorem}\label{thm:compact-conv}
		Let $f$ be sufficiently smooth and consider the mesh scaling $\Delta t = h_1 = h_2$. 
		Define the pointwise error at grid node $(i,j)$ and time level $n$ by
		\[
		e_{i,j}^n = f_{h,i,j}^n - f(x_i, y_j, t_n),
		\]
		and the corresponding maximum norm as
		\[
		\|e^n\|_\infty = \max_{\substack{0 \le i \le M_1 \\ 0 \le j \le M_2}} |e_{i,j}^n|.
		\]
		Then the numerical solution produced by scheme~\eqref{eq:practical} satisfies
		\[
		\|e^n\|_\infty\leq O( \Delta t^{1/2}).
		\]
	\end{theorem}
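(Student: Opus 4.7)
The plan is to follow the standard stability-plus-consistency route in the discrete maximum norm, adapted to the probabilistic update \eqref{eq:practical}. First I would substitute the exact solution $f$ into the right-hand side of \eqref{eq:practical} to define a local truncation error $R_{i,j}^n$. Then, a backward iteration on $n$, combined with a contraction property inherited from the convex structure of the scheme, should yield the global $L^\infty$ bound.

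For the consistency step, I would split $R_{i,j}^n$ into two contributions: (a) the idealized consistency error of Lemma~\ref{a}, bounded by $C\Delta t^{3/2}$; and (b) the interpolation error introduced by $\mathcal{L}^{\mathrm{sp}}$ on interior branches, bounded by $C(h_1^2+h_2^2)$ by smoothness of $f$ and second-order accuracy of bilinear interpolation. Branches that exit $\Omega$ at some $\tau_k<t_{n+1}$ contribute zero additional error, since the scheme substitutes $f_\partial(X_{\tau_k}^{h,k},Y_{\tau_k}^{h,k},\tau_k)$, which coincides with the exact $f$ on $\partial\Omega$. Under the scaling $\Delta t=h_1=h_2$, the two contributions combine to $\|R^n\|_\infty \leq C\Delta t^{3/2}$.

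For stability, subtracting the exact and numerical updates gives an error propagation of the form
\begin{equation*}
e_{i,j}^n = \sum_{k=1}^{4} \frac{\omega_k}{1 + r_{i,j}^n \hat\tau_k}\,\chi_k\,\mathcal{L}^{\mathrm{sp}} e^{n+1}\bigl(X_{t_{n+1}}^{h,k},Y_{t_{n+1}}^{h,k}\bigr) - R_{i,j}^n,
\end{equation*}
where $\chi_k=1$ on interior branches and $\chi_k=0$ on boundary-hitting branches, since $f_h=f_\partial=f$ exactly in the latter case. Because $\omega_k\geq 0$, $\sum_k\omega_k=1$, $(1+r_{i,j}^n\hat\tau_k)^{-1}\leq 1$ (as $r\geq 0$), and $\mathcal{L}^{\mathrm{sp}}$ is itself a convex combination of grid values (with missing boundary neighbors replaced by the exact $f_\partial$), taking $L^\infty$ norms produces the contraction $\|e^n\|_\infty \leq \|e^{n+1}\|_\infty + \|R^n\|_\infty$. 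Iterating from the terminal condition $e^N\equiv 0$ over $N-n\leq T/\Delta t$ time levels then gives $\|e^n\|_\infty \leq (T/\Delta t)\,C\Delta t^{3/2} = O(\Delta t^{1/2})$, which is the claimed estimate.

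The main obstacle I anticipate is the careful bookkeeping of the interpolation near $\partial\Omega$: when the bilinear stencil for $\mathcal{L}^{\mathrm{sp}}$ protrudes outside the computational grid, one must verify that replacing the missing nodes by $f_\partial$ simultaneously (i) preserves the convex-combination property, so $L^\infty$ stability is not compromised, and (ii) retains $O(h^2)$ pointwise accuracy, which follows from $f|_{\partial\Omega}=f_\partial$ at the affected nodes. A secondary technical point is confirming that boundary-hitting branches drop out of the error recursion cleanly through the indicator $\chi_k=0$, so that artificially small values of $\hat\tau_k$ never enter the denominator of the error bound. Once these two points are settled, the argument reduces to a routine telescoping summation.
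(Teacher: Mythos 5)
Your proposal is correct and follows essentially the same route as the paper: the paper decomposes $e_{i,j}^n$ into the consistency error of Lemma~\ref{a} ($O(\Delta t^{3/2})$), the bilinear interpolation error of the exact solution ($O(h_1^2+h_2^2)=O(\Delta t^2)$), and the propagated error bounded by $\|e^{n+1}\|_\infty$ via the convexity of the weights and of $\mathcal{L}^{\mathrm{sp}}$, then telescopes backward over $N$ steps exactly as you do. Your grouping of the first two pieces into a single truncation term $R_{i,j}^n$ and your explicit indicator $\chi_k$ for boundary-hitting branches are only cosmetic differences from the paper's $I_1+I_2+I_3$ splitting.
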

	
	\begin{proof}
		From the definition of the numerical update in scheme~\eqref{eq:practical}, the local error is defined as
		\begin{equation*}
			\begin{aligned}
				e_{i,j}^n 
				&= \sum_{k=1}^4 \frac{\omega_k}{1+r_{i,j}^n \hat{\tau}_k}
				U\!\left(X_{\tau_k}^{h,k}, Y_{\tau_k}^{h,k}, \tau_k\right)
				- f(x_i, y_j, t_n) \\
				&= I_1 + I_2 + I_3,
			\end{aligned}
		\end{equation*}
		where
		\begin{equation*}
			\begin{aligned}
				I_1 &= \sum_{k=1}^4 
				\frac{\omega_k}{1+r_{i,j}^n \hat{\tau}_k}
				f\!\left(X_{\tau_k}^{h,k}, Y_{\tau_k}^{h,k}, \tau_k\right)
				- f(x_i, y_j, t_n), \\[0.3em]
				I_2 &= \sum_{(X_{t_{n+1}}^{h,k},\, Y_{t_{n+1}}^{h,k}) \in \Omega}
				\frac{\omega_k}{1+r_{i,j}^n \Delta t}\,
				\mathcal{L}^{sp}(f_h - f)\!\left(X_{t_{n+1}}^{h,k}, Y_{t_{n+1}}^{h,k}, t_{n+1}\right), \\[0.3em]
				I_3 &= \sum_{(X_{t_{n+1}}^{h,k},\, Y_{t_{n+1}}^{h,k}) \in \Omega}
				\frac{\omega_k}{1+r_{i,j}^n \Delta t}\,
				(\mathcal{L}^{sp}f - f)\!\left(X_{t_{n+1}}^{h,k}, Y_{t_{n+1}}^{h,k}, t_{n+1}\right).
			\end{aligned}
		\end{equation*}
		
		We estimate each component:
		
		1. $I_1$ (Local consistency error): By Lemma \ref{a}, $|I_1| = O(\Delta t^{3/2})$.
		
		2. $I_2$ (Propagated numerical error): Stability of $\mathcal{L}^{sp}$ gives $|I_2| \le \|e^{n+1}\|_\infty$.
		
		3. $I_3$ (Interpolation error of exact solution): Bilinear interpolation gives $|I_3| = O(h_1^2+h_2^2) = O(\Delta t^2)$ under $\Delta t = h_1 = h_2$.
		
		Combining, we get
		\[
		|e_{i,j}^n| \le O(\Delta t^{3/2}) + \|e^{n+1}\|_\infty.
		\]
		Iterating backward from $n=N$, we obtain
		\[
		\|e^k\|_\infty \le (N-k)O(\Delta t^{3/2}) + \|e^N\|_\infty = O(\Delta t^{1/2}).
		\]
		
		This completes the proof.
	\end{proof}
	The global convergence rate of $O(\Delta t^{1/2})$ is constrained by the asymmetric truncation term $I_1$, which arises when different branches terminate at different times. To recover cancellation of these low-order error terms, we introduce a uniform stopping rule: when any branch encounters the boundary, all four branches are stopped simultaneously at the earliest stopping time.
	
	Define the uniform stopping time and its increment as
	\[
	\tau = \min_{1 \le k \le 4} \tau_k, \qquad \hat{\tau} = \tau - t_n.
	\]
	The corresponding branch endpoints are obtained by propagating the discrete process for each branch up to this common time $\tau$. The uniform update is then given by
	\begin{equation}\label{eq:symmetric}
		f_{h,i,j}^{n} = \frac{1}{4(1 + r_{i,j}^n \hat{\tau})} \sum_{k=1}^{4} U\left(X_{\tau}^{h,k}, Y_{\tau}^{h,k}, \tau\right),
	\end{equation}
	where the evaluation of $U(\cdot)$ depends on the endpoint location and time:
	\begin{itemize}
		\item For $\tau=t_{n+1}$, $U$ is evaluated using spatial interpolation $\mathcal{L}^{\mathrm{sp}}f_h$ at time $t_{n+1}$.
		\item For $\tau < t_{n+1}$, $U$ is evaluated via space-time trilinear interpolation $\mathcal{L}^{\mathrm{st}}$.
		\item For $(X_{\tau}^{h,k}, Y_{\tau}^{h,k}) \in \partial\Omega$, the boundary condition $f_{\partial}$ is applied.
	\end{itemize}
	
	\medskip
	\noindent
	\textbf{Algorithm 2 (uniform stopping time scheme \eqref{eq:symmetric}):}
	\begin{enumerate}
		\item \textbf{Initialization:} Set $f_{h,i,j}^N = \varphi(x_i, y_j)$.
		\item For $n = N-1, \dots, 0$:
		\begin{enumerate}
			\item For all interior nodes $(i,j)$:
			\begin{enumerate}
				\item Compute the four candidate endpoints and their stopping times $\{\tau_k\}$.
				\item Determine the uniform stopping time $\tau = \min_k \tau_k$ and update all $\tau_k \leftarrow \tau$.
				\item If $\tau = t_{n+1}$:
				\begin{enumerate}
					\item For each branch endpoint:
					\begin{itemize}
						\item If it lies on $\partial\Omega$: Use boundary value $f_{\partial}$.
						\item Otherwise: Evaluate $U$ via spatial interpolation $\mathcal{L}^{\mathrm{sp}}$ at $t_{n+1}$.
					\end{itemize}
					\item Update $f_{h,i,j}^n$ explicitly using \eqref{eq:symmetric}.
				\end{enumerate}
				\item Else ($\tau < t_{n+1}$):
				\begin{enumerate}
					\item For each branch endpoint:
					\begin{itemize}
						\item If it lies on $\partial\Omega$: Use boundary value $f_{\partial}$.
						\item Otherwise: Evaluate $U$ via space-time trilinear interpolation $\mathcal{L}^{\mathrm{st}}$.
					\end{itemize}
					\item Assemble the sparse linear system $T\mathbf{F} = \mathbf{b}$ for the unknowns coupled through $\mathcal{L}^{\mathrm{st}}$.
				\end{enumerate}
				\item Solve $T\mathbf{F} = \mathbf{b}$ .
			\end{enumerate}
		\end{enumerate}
	\end{enumerate}
	
	\begin{figure}[htbp]
		\centering
		\includegraphics[width=0.9\linewidth]{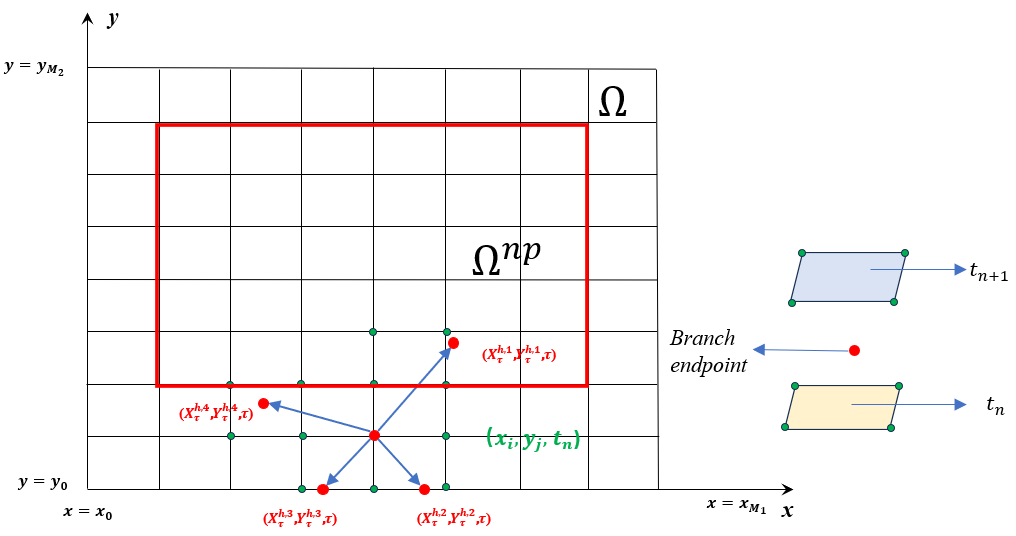}
		\caption{Illustration of the uniform stopping time algorithm where $\tau < t_{n+1}$. Here, $\Omega^{np}$ denotes the set of grid points reached from $t_n$ with a stopping time of $t_{n+1}$.}
		\label{fig2}
	\end{figure}
	
	Positivity is straightforward for nodes within the region $\Omega^{np}$. For nodes in $\Omega \setminus \Omega^{np}$, the implicit coupling leads to a sparse linear system for the corresponding unknowns, characterized by the matrix $T$. It can be readily verified that $T$ is an M-matrix, thereby ensuring the positivity-preserving property of the uniform stopping-time scheme.
	
	\begin{theorem}\label{thm:symmetric-conv}
		Assume that $f$ is sufficiently smooth and $\Delta t = h_1 = h_2$. 
		Then the uniform stopping time scheme~\eqref{eq:symmetric} satisfies the improved estimate
		\[
		\|e^n\|_\infty \le O(\Delta t).
		\]
	\end{theorem}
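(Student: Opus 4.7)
The plan is to mirror the error-decomposition strategy of Theorem~\ref{thm:compact-conv} but to exploit the extra symmetry induced by the uniform stopping rule. I would first write $e_{i,j}^n = I_1 + I_2 + I_3$, where $I_1$ is the local consistency error obtained by substituting the exact solution $f$ in place of $f_h$ inside the update~\eqref{eq:symmetric}, $I_2$ collects the propagated numerical error $f_h-f$ evaluated at the branch endpoints, and $I_3$ measures the interpolation error of the exact solution itself. The critical difference from Theorem~\ref{thm:compact-conv} is that the uniform stopping time enables the sharper estimate $|I_1| = O(\hat\tau^2)$---one order better than the $O(\Delta t^{3/2})$ bound of Lemma~\ref{a}. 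Under the scaling $\Delta t = h_1 = h_2$, this improvement is exactly what is needed to promote the global rate from $O(\Delta t^{1/2})$ to $O(\Delta t)$.

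To establish the sharpened consistency bound, I observe that when $\tau_1=\tau_2=\tau_3=\tau_4=\hat\tau$ the weight formulas~\eqref{eq:omega} collapse to $\omega_k=1/4$, while the four discrete displacements in~\eqref{eq:process-approx} organize into two exactly antipodal pairs: branches $1\leftrightarrow 3$ differ by the sign of their $\alpha$-components, and branches $2\leftrightarrow 4$ differ by the sign of their $\beta$-components. Expanding $f(X_\tau^{h,k},Y_\tau^{h,k},\tau)$ to fourth order around $(x_i,y_j,t_n)$ and averaging with equal weights, every purely random odd moment (cubic in the spatial increments) cancels identically by antipodal symmetry---not just after the moment-matching manipulations used in Lemma~\ref{a}. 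Using that $f$ satisfies~\eqref{eq:PDE-backward} to annihilate the leading $O(\hat\tau)$ term, the surviving contributions come only from fourth-order Taylor terms and drift--diffusion cross products, each of size $O(\hat\tau^2)\le O(\Delta t^2)$. A parallel expansion shows that the trilinear interpolant used when $\tau<t_{n+1}$ introduces error $O(\hat\tau^2+h^2)=O(\Delta t^2)$, so $|I_3|=O(\Delta t^2)$ as well.

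The treatment of $I_2$ splits into two regimes. When $\tau=t_{n+1}$ the update is explicit and convex, giving the standard bound $|I_2|\le\|e^{n+1}\|_\infty/(1+r_{i,j}^n\Delta t)$. When $\tau<t_{n+1}$ the space--time operator $\mathcal{L}^{\mathrm{st}}$ couples values at $t_n$ among themselves, producing the sparse system $T\mathbf{e}^n=\mathbf{r}$. Unpacking the trilinear weights, the diagonal entry $1+r_{i,j}^n\hat\tau$ dominates the row by the amount $r_{i,j}^n\hat\tau+\hat\tau/\Delta t$, since the off-diagonal $t_n$-weights sum to $(\Delta t-\hat\tau)/\Delta t$ and the $t_{n+1}$-weights sum to $\hat\tau/\Delta t$. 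Combining the M-matrix property established after Algorithm~2 with this strict row-sum dominance and applying a discrete maximum-principle argument yields $\|e^n\|_\infty\le \|e^{n+1}\|_\infty/(1+r_{i,j}^n\Delta t) + C\Delta t^2$, uniformly across the two regimes.

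Iterating the recursion $\|e^n\|_\infty\le\|e^{n+1}\|_\infty+C\Delta t^2$ backward from the exact terminal condition $e^N=0$ over $N=T/\Delta t$ time levels gives $\|e^n\|_\infty\le N\cdot C\Delta t^2=O(\Delta t)$, which is the claimed rate. The main technical obstacle is the uniform verification of the diagonal-dominance estimate for the implicit subsystem on the irregular set $\Omega\setminus\Omega^{np}$: one must confirm that the boundary-adapted trilinear stencil, in which some off-grid neighbors are replaced by the Dirichlet data $f_\partial$, still preserves both the M-matrix structure and the sub-stochastic row-sum bound that underlies the maximum-principle argument, even at nodes whose branches strike $\partial\Omega$ in substantially less than a full time step.
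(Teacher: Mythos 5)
Your decomposition $e_{i,j}^n=I_1+I_2+I_3$, the symmetry argument giving $|I_1|=O(\hat\tau^2)$ (antipodal cancellation of odd moments under equal weights, then the PDE killing the $O(\hat\tau)$ term), and the trilinear bound $|I_3|=O(\Delta t^2)$ all match the paper's proof. The gap is in your treatment of $I_2$ in the implicit regime $\tau<t_{n+1}$. You compute a diagonal-dominance margin of $r_{i,j}^n\hat\tau+\hat\tau/\Delta t$ from the fact that the $t_n$-weights sum to $(\Delta t-\hat\tau)/\Delta t$. But $\hat\tau$ can be far smaller than $\Delta t$ — for a node one cell from the boundary the first exit time scales like $\hat\tau\sim h^2/\sigma^2=O(\Delta t^2)$ — so your margin degenerates to $O(\Delta t)$ and the induced amplification of the $O(\Delta t^2)$ residual is $O(\Delta t/\hat\tau)=O(1/\Delta t)$. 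That turns the per-step error into $O(\Delta t)$ and the global error into $O(1)$; the recursion $\|e^n\|_\infty\le\|e^{n+1}\|_\infty/(1+r\Delta t)+C\Delta t^2$ you assert "uniformly across the two regimes" does not follow from this margin. You flag the boundary-adapted stencil as the "main technical obstacle" but do not supply the ingredient that resolves it.

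The missing ingredient, which is exactly what the paper uses, is that whenever $\tau<t_{n+1}$ at least one of the four branches terminates \emph{on} $\partial\Omega$ at time $\tau$ and is assigned the exact Dirichlet datum $f_\partial$, contributing zero error. Hence the total weight on error-carrying interior values is at most $3/4$, and with $\alpha=(t_{n+1}-\tau)/\Delta t$ one gets
\begin{equation*}
\|e^n\|_\infty \le \tfrac34\alpha\,\|e^n\|_\infty+\tfrac34(1-\alpha)\,\|e^{n+1}\|_\infty+O(\Delta t^2),
\end{equation*}
where now $1-\tfrac34\alpha\ge\tfrac14$ \emph{uniformly in} $\hat\tau$, so the amplification factor is at most $4$ and the coefficient $\tfrac34(1-\alpha)/(1-\tfrac34\alpha)\le 1$. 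This restores the recursion $\|e^n\|_\infty\le\|e^{n+1}\|_\infty+O(\Delta t^2)$ and the $O(\Delta t)$ global rate. Your argument becomes correct once you replace the $\hat\tau/\Delta t$ dominance margin with this $3/4$-weight observation.
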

	
	\begin{proof}
		Analogously to the proof of Theorem~\ref{thm:compact-conv}, the grid-point error satisfies
		\[
		\begin{aligned}
			e_{i,j}^n 
			&= \sum_{k=1}^4 \frac{\omega_k}{1+r_{i,j}^n \hat{\tau}_k}
			U\!\left(X_{\tau_k}^{h,k}, Y_{\tau_k}^{h,k}, \tau_k\right)
			- f(x_i, y_j, t_n) \\
			&= I_1 + I_2 + I_3 ,
		\end{aligned}
		\]
		where $\alpha = \dfrac{t_{n+1}-\tau}{\Delta t}\in[0,1]$ and
		\[
		\begin{aligned}
			I_1 &= \sum_{k=1}^4 
			\frac{1}{4(1+r_{i,j}^n \hat{\tau})}
			f\!\left(X_{\tau}^{h,k}, Y_{\tau}^{h,k}, \tau\right)
			- f(x_i, y_j, t_n), \\[0.4em]
			I_2 &= \alpha\sum_{(X_{\tau}^{h,k}, Y_{\tau}^{h,k}) \in \Omega}
			\frac{1}{4(1+r_{i,j}^n \hat{\tau})}
			\mathcal{L}^{sp}(f_h - f)\left(X_{\tau}^{h,k}, Y_{\tau}^{h,k}, t_n\right) \\
			&\qquad + (1-\alpha)\sum_{(X_{\tau}^{h,k}, Y_{\tau}^{h,k}) \in \Omega}
			\frac{1}{4(1+r_{i,j}^n \hat{\tau})}
			\mathcal{L}^{sp}(f_h - f)\left(X_{\tau}^{h,k}, Y_{\tau}^{h,k}, t_{n+1}\right), \\[0.4em]
			I_3 &= 
			\sum_{(X_{\tau}^{h,k}, Y_{\tau}^{h,k}) \in \Omega} \frac{1}{4(1+r_{i,j}^n \hat{\tau})}
			(\mathcal{L}^{st} f - f)\left(X_{\tau}^{h,k}, Y_{\tau}^{h,k}, \tau\right).
		\end{aligned}
		\]
		
		We estimate each term sequentially:
		
		1. $I_1$: Analogously to Lemma~\ref{a}, $|I_1| = O(\Delta t^2)$.
		
		2. $I_3$: For smooth $f$, space-time trilinear interpolation gives
		\[
		|\mathcal{L}^{st} f - f| = O(h_1^2 + h_2^2 + \Delta t^2) = O(\Delta t^2) \quad \text{for } \Delta t = h_1 = h_2.
		\]
		
		3. $I_2$: Propagation of discrete error via the stable operator $\mathcal{L}^{st}$ gives
		\begin{itemize}
			\item If $\tau = t_{n+1}$, $|I_2| \le \|e^{n+1}\|_\infty$.
			\item If $\tau < t_{n+1}$, at least one branch intersects the boundary:
			\[
			|I_2| \le \frac{3}{4}\alpha \|e^n\|_\infty + \frac{3}{4}(1-\alpha)\|e^{n+1}\|_\infty.
			\]
		\end{itemize}
		
		Combining these, we get two alternative bounds:
		\[
		\| e^n \|_\infty \le \|e^{n+1}\|_\infty + O(\Delta t^2),
		\]
		or
		\[
		\| e^n \|_\infty
		\le 
		\frac{\frac{3}{4}(1-\alpha)}{1 - \frac{3}{4}\alpha}\|e^{n+1}\|_\infty
		+\frac{1}{1 - \frac{3}{4}\alpha} O(\Delta t^2).
		\]
		
		Using $0 \le \frac{\frac{3}{4}(1-\alpha)}{1 - \frac{3}{4}\alpha} \le 1$ and $1 \le \frac{1}{1 - \frac{3}{4}\alpha} \le 4$, both bounds give
		\[
		\|e^n\|_\infty \le \|e^{n+1}\|_\infty + O(\Delta t^2).
		\]
		
		Backward iteration from $n = N$ finally yields
		\[
		\|e^n\|_\infty \le O(\Delta t),
		\]
		completing the proof.
	\end{proof}
	\subsection{Reflective Boundary Conditions}
	
	The previous section addressed Dirichlet boundary conditions. We now consider homogeneous Neumann (reflective) boundary conditions, governed by the system:
	
	\begin{equation}  \label{eq:PDE-reflective}
		\begin{cases}
			\partial_t f + \tfrac{1}{2}\mathrm{Tr}\!\big(AA^\top D^2 f\big) + B\cdot\nabla f - r(x,y,t)\,f = 0,
			& (x,y)\in\Omega,\; t\in[0,T), \\
			f(\cdot,\cdot,T)=\varphi, & (x,y)\in\Omega, \\
			\dfrac{\partial f}{\partial n} = 0, & (x,y)\in \partial\Omega,t\in[0,T)
		\end{cases}
	\end{equation}
	with the same domain,notation and coefficient assumptions as before and $\frac{\partial f}{\partial n}$ is normal derivative on the boundary.
	
	In stochastic interpretation, homogeneous Neumann conditions correspond to specular reflection: branches that exit the domain \(\Omega\) are reflected back into the interior. Numerically, this is implemented by mirroring each proposed branch position that falls outside \(\Omega\) across the corresponding boundary.
	
	For interior nodes \(1\le i\le M_1-1,\;1\le j\le M_2-1\), the numerical update takes the form:
	\begin{equation} \label{eq:interior-update}
		f_{h,i,j}^{n} = \frac{1}{4(1+r_{i,j}^n\Delta t)} \sum_{k=1}^{4}
		\mathcal{L}^{\mathrm{sp}} f_h\big(X_k^h,Y_k^h,t_{n+1}\big),
	\end{equation}
	where \((X_k^h,Y_k^h)\) are the final branch positions obtained by applying the rules \eqref{eq:x-reflection},\eqref{eq:y-reflection} to the proposed branch points.
	
	The four proposed branch points are defined as:
	\begin{equation}
		\label{pr}
		\begin{aligned}
			(x_1^*, y_1^*) &= \left( x_i + b_{1,i,j}^n \Delta t + \alpha_{i,j}^n \sigma_{1,i,j}^n \sqrt{\Delta t},\; y_j + b_{2,i,j}^n \Delta t + \alpha_{i,j}^n \sigma_{2,i,j}^n \sqrt{\Delta t} \right), \\
			(x_2^*, y_2^*) &= \left( x_i + b_{1,i,j}^n \Delta t - \beta_{i,j}^n \sigma_{1,i,j}^n \sqrt{\Delta t},\; y_j + b_{2,i,j}^n \Delta t + \beta_{i,j}^n \sigma_{2,i,j}^n \sqrt{\Delta t} \right), \\
			(x_3^*, y_3^*) &= \left( x_i + b_{1,i,j}^n \Delta t - \alpha_{i,j}^n \sigma_{1,i,j}^n \sqrt{\Delta t},\; y_j + b_{2,i,j}^n \Delta t - \alpha_{i,j}^n \sigma_{2,i,j}^n \sqrt{\Delta t} \right), \\
			(x_4^*, y_4^*) &= \left( x_i + b_{1,i,j}^n \Delta t + \beta_{i,j}^n \sigma_{1,i,j}^n \sqrt{\Delta t},\; y_j + b_{2,i,j}^n \Delta t - \beta_{i,j}^n \sigma_{2,i,j}^n \sqrt{\Delta t} \right).
		\end{aligned}
	\end{equation}
	
	For each proposed branch point \((x_k^*, y_k^*)\), the final branch point \((X_k^h, Y_k^h)\) is computed according to the following rules:
	
	\begin{equation} \label{eq:x-reflection}
		X_k^h = 
		\begin{cases}
			2x_0 - x_k^*, & x_k^* < x_0, \\
			2x_{M_1} - x_k^*, & x_k^* > x_{M_1}, \\
			x_k^*, & \text{otherwise},
		\end{cases}
	\end{equation}
	
	\begin{equation} \label{eq:y-reflection}
		Y_k^h = 
		\begin{cases}
			2y_0 - y_k^*, & y_k^* < y_0, \\
			2y_{M_2} - y_k^*, & y_k^* > y_{M_2}, \\
			y_k^*, & \text{otherwise}.
		\end{cases}
	\end{equation}
	
	For boundary points, we handle them as follows:
	\[
	\begin{aligned}
		f_{h,0,j}^n &= f_{h,1,j}^n,      & 0\le j\le M_2,\\
		f_{h,M_1,j}^n &= f_{h,M_1-1,j}^n, & 0\le j\le M_2,\\
		f_{h,i,0}^n &= f_{h,i,1}^n,      & 0\le i\le M_1,\\
		f_{h,i,M_2}^n &= f_{h,i,M_2-1}^n, & 0\le i\le M_1.
	\end{aligned}
	\]
	
	\medskip
	\noindent
	\textbf{Algorithm 3(Implementation of Reflective Boundary Scheme):}
	\begin{enumerate}
		\item \textbf{Initialization.} Set \(f_{h,i,j}^N = \varphi(x_i,y_j)\) for all grid nodes.
		\item For \(n = N-1,\ldots,0\):
		\begin{enumerate}
			\item For \(i = 1,\ldots,M_1-1\), \(j = 1,\ldots,M_2-1\):
			\begin{enumerate}
				\item For \(k = 1,\ldots,4\):
				\begin{enumerate}
					\item Compute \((x_k^*,y_k^*)\) using \eqref{pr}, then obtain \((X_k^h,Y_k^h)\) via \eqref{eq:x-reflection} and \eqref{eq:y-reflection}.
					\item Evaluate \(U_k = \mathcal{L}^{\mathrm{sp}}f_h(X_k^h,Y_k^h,t_{n+1})\).
				\end{enumerate}
				\item Update \(f_{h,i,j}^n = \dfrac{1}{4(1+r_{i,j}^n\Delta t)}\sum_{k=1}^4 U_k\).
			\end{enumerate}
			\item At the boundary, apply
			\[
			\begin{aligned}
				f_{h,0,j}^n &= f_{h,1,j}^n,      & 0\le j\le M_2,\\
				f_{h,M_1,j}^n &= f_{h,M_1-1,j}^n, & 0\le j\le M_2,\\
				f_{h,i,0}^n &= f_{h,i,1}^n,      & 0\le i\le M_1,\\
				f_{h,i,M_2}^n &= f_{h,i,M_2-1}^n, & 0\le i\le M_1.
			\end{aligned}
			\]
		\end{enumerate}
	\end{enumerate}
	
	Since each update in \eqref{eq:interior-update} forms a nonnegative linear combination of nonnegative values with the sum of coefficients equal to $\frac{1}{1+r^{n}_{i,j}\Delta t} \leq 1$, the scheme maintains both positivity and $L^{\infty}$-stability.
	
	\begin{figure}[htbp]
		\centering
		\includegraphics[width=0.8\textwidth]{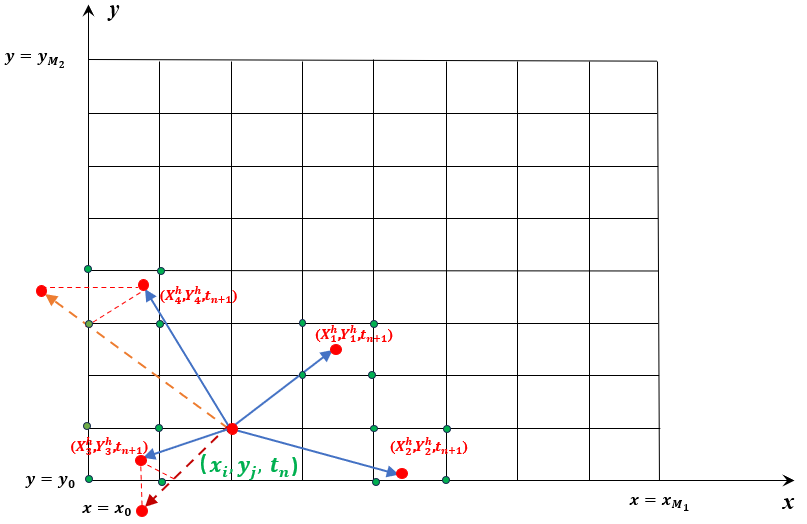}
		\caption{Illustration of Algorithm 3. Here, $X_4^h < x_0$ and $Y_3^h < y_0$ are reflected back into the domain.}
		\label{fig3}
	\end{figure}
	
	\begin{theorem}
		\label{pw2}
		Assume that the exact solution $f$ is sufficiently smooth and $\Delta t = h_1 = h_2$. Then the scheme \eqref{eq:interior-update} satisfies
		\[
		\|e^n\|_\infty \le O( \Delta t^{1/2}).
		\]
	\end{theorem}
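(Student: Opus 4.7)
I would mirror the decomposition used in the proof of Theorem~\ref{thm:compact-conv} and write the grid-node error at an interior node as
\[
e_{i,j}^n = I_1 + I_2 + I_3,
\]
where $I_1$ is the local truncation error obtained by inserting the exact solution into the scheme \eqref{eq:interior-update}, $I_2$ collects the interpolation of $(f_h - f)$ at the effective branch endpoints $(X_k^h, Y_k^h)$, and $I_3$ collects the interpolation error of the exact solution $(\mathcal{L}^{\mathrm{sp}} f - f)$. Stability of $\mathcal{L}^{\mathrm{sp}}$ as a convex combination of grid values gives $|I_2| \le \|e^{n+1}\|_\infty$, and standard bilinear-interpolation estimates yield $|I_3| = O(h_1^2 + h_2^2) = O(\Delta t^2)$. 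Establishing local consistency $|I_1| = O(\Delta t^{3/2})$ uniformly in space, \emph{including} at nodes for which one or more branches are reflected, is therefore the key task.

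The device I would use is the \emph{method of images}: for each side of $\partial\Omega$, extend $f$ by even reflection across that side to obtain a function $\tilde f$ defined on a slightly larger domain. The Neumann condition $\partial_n f|_{\partial\Omega}=0$ makes $\tilde f$ a $C^2$ extension of $f$, and because the second derivatives of $f$ are smooth on each side, $\tilde f$ is $C^{2,1}$ so that Taylor's theorem gives a remainder of order $O(|\Delta x|^3 + |\Delta y|^3)$ along any segment. The reflection rules \eqref{eq:x-reflection}--\eqref{eq:y-reflection} are built precisely so that $f(X_k^h, Y_k^h, t) = \tilde f(x_k^*, y_k^*, t)$ whenever reflection is triggered; hence the reflective scheme applied to the exact solution coincides with the ordinary non-reflective four-branch quadrature applied to $\tilde f$ at the proposed points $(x_k^*, y_k^*)$, whose displacements from $(x_i, y_j)$ are uniformly $O(\sqrt{\Delta t})$.

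Expanding $\tilde f(x_k^*, y_k^*, t_{n+1})$ around $(x_i, y_j, t_n)$ and summing with the four equal weights $\omega_k = 1/4$, the first-order moments cancel by Rademacher symmetry and the second-order moments reproduce exactly the diffusion, convection, and damping coefficients, as in Lemma~\ref{a} with $\hat\tau_k = \Delta t$. Because $(x_i, y_j)$ lies in the open interior, $\tilde f$ and its derivatives agree there with $f$ and its derivatives, so the PDE \eqref{eq:PDE-reflective} forces the bracketed coefficient of $\Delta t$ to vanish, leaving $|I_1| = O(\Delta t^{3/2})$. For boundary grid nodes the copy rule $f_{h,0,j}^n = f_{h,1,j}^n$ etc.\ introduces at most $f(x_0, y_j, t_n) - f(x_1, y_j, t_n) = -h_1 \partial_x f(x_0, y_j, t_n) + O(h_1^2) = O(\Delta t^2)$ of additional error, since $\partial_x f|_{x=x_0}=0$ kills the leading term. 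Combining the three pieces gives the recursion $\|e^n\|_\infty \le \|e^{n+1}\|_\infty + O(\Delta t^{3/2})$, and backward iteration from $\|e^N\|_\infty=0$ with $N = T/\Delta t$ yields the claimed bound $\|e^n\|_\infty = O(\Delta t^{1/2})$.

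The main obstacle is the reflective-consistency step: one must verify that the Neumann condition supplies enough regularity for $\tilde f$ so that Taylor's remainder is genuinely $O(|\Delta x|^3)$ rather than only $o(|\Delta x|^2)$, and one must treat nodes near a corner whose branch is reflected across two orthogonal sides simultaneously. For the latter, the compatibility identity $\partial_{xy} f(x_0, y_0, t)=0$, which follows by differentiating the Neumann conditions $\partial_x f(x_0, y, t) = 0$ and $\partial_y f(x, y_0, t)=0$ tangentially along the adjacent edges, removes the would-be jump in the mixed derivative after double reflection, so the $O(\Delta t^{3/2})$ local estimate extends uniformly to the corner region and the global bound follows.
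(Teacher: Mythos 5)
Your proposal is correct and reaches the same error recursion $\|e^n\|_\infty \le \|e^{n+1}\|_\infty + O(\Delta t^{3/2})$ as the paper, but the key consistency step $|I_1|=O(\Delta t^{3/2})$ is handled by a genuinely different device. The paper works directly with $f$ at the reflected endpoint: it Taylor-expands $f(X_1^h,Y_1^h,t_{n+1})$ about $(x_i,y_j,t_n)$, uses the fact that reflection is only triggered when $x_{M_1}-x_i=O(\sqrt{\Delta t})$, and then invokes the Neumann condition $\partial_x f(x_{M_1},y_j^*,t_n)=0$ (itself Taylor-expanded back to $(x_i,y_j)$) to cancel the offending $O(\sqrt{\Delta t})$ terms $K_1,\dots,K_4$ one by one; it treats a representative single-reflection branch and declares the rest analogous, without explicitly addressing corners. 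You instead observe that the reflection rules \eqref{eq:x-reflection}--\eqref{eq:y-reflection} are exactly the even extension of $f$ across the boundary, so the reflective quadrature applied to $f$ equals the interior quadrature applied to $\tilde f$ at the unreflected proposed points, and the whole of Lemma~\ref{a} (with $\hat\tau_k=\Delta t$, $\omega_k=1/4$) transfers verbatim once $\tilde f$ is shown to be $C^{2,1}$. This is more structural, makes transparent that the Neumann condition is precisely the regularity hypothesis needed, and handles edges and corners in one stroke; the paper's computation is more elementary and self-contained but case-by-case. Your regularity accounting is also right: the extension is only $C^{2,1}$, not $C^3$, yet the integral form of the Taylor remainder still gives $O(|\Delta|^3)=O(\Delta t^{3/2})$. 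One point you should state more carefully: continuity of $\partial_{xy}\tilde f$ across a single edge, say $x=x_{M_1}$, already requires $\partial_{xy}f(x_{M_1},y,t)=0$ for all $y$ (the singly reflected strip carries $-\partial_{xy}f$), obtained by differentiating the Neumann condition tangentially along that edge; you derive exactly this identity but present it only as a corner compatibility condition, whereas it is needed along every edge for the claimed $C^2$ property of $\tilde f$. This is a presentational rather than mathematical gap, since the identity is immediate from $\partial_x f(x_{M_1},\cdot,t)\equiv 0$.
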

	\begin{proof}
		Let $e_{i,j}^n = f_{h,i,j}^n - f(x_i,y_j,t_n)$. We first consider interior nodes $i = M_1 - 1, \ldots, 1$ and $j = M_2 - 1, \ldots, 1$. 
		
		To illustrate the treatment of a reflected endpoint, we take the first branch as a representative example. In the configuration considered here only the $x$–component of the first branch crosses the vertical boundary $x=x_{M_1}$ and is reflected, while the $y$–component remains inside the domain; hence
		\[
		X_1^h = 2x_{M_1}-x_i - b_{1,i,j}^n\Delta t - \alpha_{i,j}^n\sigma_{1,i,j}^n\sqrt{\Delta t},\qquad
		Y_1^h = y_j + b_{2,i,j}^n\Delta t + \alpha_{i,j}^n\sigma_{2,i,j}^n\sqrt{\Delta t}.
		\]
		The remaining branches either stay inside the domain or are reflected (in $x$ or $y$) in an analogous manner, and their Taylor expansions follow the same procedure.
		
		We begin with an error decomposition. From the scheme~\eqref{eq:interior-update},
		\[
		|e_{i,j}^n|
		\le |I_1| + \|e^{n+1}\|_\infty + O(\Delta t^2),\quad i = M_1 - 1, \ldots, 1\quad \text{and}\quad j = M_2 - 1, \ldots, 1,
		\]
		where
		\[
		I_1=\frac{1}{4(1+r_{i,j}^n\Delta t)}
		\sum_{k=1}^4 f(X_k^h,Y_k^h,t_{n+1}) - f(x_i,y_j,t_n).
		\]

		Since the reflection in $x$ enforces the condition
		\[
		x_i + b_{1,i,j}^n\Delta t + \alpha_{i,j}^n\sigma_{1,i,j}^n\sqrt{\Delta t} \ge x_{M_1},
		\]
		we derive the scaling relation
		\begin{equation}
			\label{eq:x-scaling-clean}
			x_{M_1}-x_i = O(\sqrt{\Delta t}).
		\end{equation}
		
		Next, we perform a Taylor expansion and leverage \eqref{eq:x-scaling-clean} for subsequent derivations. For $f_{i,j}^n=f(x_i,y_j,t_n)$, a Taylor series expansion of $f(X_1^h,Y_1^h,t_{n+1})$ gives
		\begin{equation}
			\label{eq:taylor-expanded-clean}
			f(X_1^h,Y_1^h,t_{n+1})
			=
			f_{i,j}^n
			+\partial_t f_{i,j}^n\Delta t
			+\sum_{p=1}^4 K_p+O(\Delta t^{3/2}),
		\end{equation}
		where
		\begin{equation} \label{kp}
			\begin{aligned}
				K_1 &= 2(x_{M_1} - x_i) \left[
				\partial_x f_{i,j}^n
				+ \partial_{xy} f_{i,j}^n (b_{2,i,j}^n\Delta t + \alpha_{i,j}^n\sigma_{2,i,j}^n\sqrt{\Delta t})
				+ (x_{M_1} - x_i) \partial_{xx} f_{i,j}^n
				\right], \\
				K_2 &= \alpha_{i,j}^n \sigma_{1,i,j}^n \sqrt{\Delta t} \left[
				-2\partial_{xx} f_{i,j}^n(x_{M_1}-x_i)
				- \partial_x f_{i,j}^n
				- \partial_{xy} f_{i,j}^n(b_{2,i,j}^n\Delta t + \alpha_{i,j}^n\sigma_{2,i,j}^n\sqrt{\Delta t})
				\right], \\
				K_3 &= b_{1,i,j}^n\Delta t\left[
				-2\partial_{xx}f_{i,j}^n(x_{M_1}-x_i)
				- \partial_x f_{i,j}^n
				\right]
				+ \partial_y f_{i,j}^n(b_{2,i,j}^n\Delta t + \alpha_{i,j}^n\sigma_{2,i,j}^n\sqrt{\Delta t}), \\
				K_4 &= \tfrac12 \partial_{xx} f_{i,j}^n(\sigma_{1,i,j}^n)^2(\alpha_{i,j}^n)^2\Delta t
				+ \tfrac12 \partial_{yy} f_{i,j}^n(\sigma_{2,i,j}^n)^2(\alpha_{i,j}^n)^2\Delta t.
			\end{aligned}
		\end{equation}

		We now examine boundary expansions and Neumann conditions. A Taylor expansion at $x_{M_1}$ gives
		\[
		\begin{aligned}
			\partial_x f_{M_1,j}^n
			&= \partial_x f_{i,j}^n
			+\partial_{xx}f_{i,j}^n(x_{M_1}-x_i)
			+O(\Delta t), \\
			\partial_x f(x_{M_1}, y_j^*, t_n)
			&= \partial_x f_{i,j}^n
			+ \partial_{xy} f_{i,j}^n\Delta y
			+ \partial_{xx} f_{i,j}^n(x_{M_1}-x_i)
			+ O(\Delta t),
		\end{aligned}
		\]
		where
		\[
		y_j^* = y_j + b_{2,i,j}^n\Delta t + \alpha_{i,j}^n\sigma_{2,i,j}^n\sqrt{\Delta t},
		\qquad
		\Delta y = b_{2,i,j}^n\Delta t + \alpha_{i,j}^n\sigma_{2,i,j}^n\sqrt{\Delta t}.
		\]
		
		Applying the Neumann condition, we derive the key relations
		\begin{equation}
			\label{eq:key-boundary-clean}
			\begin{aligned}
				\partial_x f_{i,j}^n 
				+ \partial_{xy} f_{i,j}^n\Delta y
				+ \partial_{xx} f_{i,j}^n(x_{M_1}-x_i)
				&= O(\Delta t),\\
				-\partial_{xx} f_{i,j}^n(x_{M_1}-x_i)
				&= \partial_x f_{i,j}^n + O(\Delta t).
			\end{aligned}
		\end{equation}
		
		Substituting \eqref{eq:x-scaling-clean} and \eqref{eq:key-boundary-clean} into \eqref{eq:taylor-expanded-clean} yields
		\[
		\begin{aligned}
			f(X_1^h,Y_1^h,t_{n+1})
			=&\ f_{i,j}^n 
			+ \partial_t f_{i,j}^n\Delta t
			+ \partial_x f_{i,j}^n(b_{1,i,j}^n\Delta t + \alpha_{i,j}^n\sigma_{1,i,j}^n\sqrt{\Delta t}) \\
			&+ \partial_y f_{i,j}^n(b_{2,i,j}^n\Delta t + \alpha_{i,j}^n\sigma_{2,i,j}^n\sqrt{\Delta t}) \\
			&+ \partial_{xy}f_{i,j}^n(\alpha_{i,j}^n)^2\sigma_{1,i,j}^n\sigma_{2,i,j}^n\Delta t \\
			&+ \frac12 \partial_{xx}f_{i,j}^n(\alpha_{i,j}^n)^2(\sigma_{1,i,j}^n)^2\Delta t
			+ \frac12 \partial_{yy}f_{i,j}^n(\alpha_{i,j}^n)^2(\sigma_{2,i,j}^n)^2\Delta t
			+ O(\Delta t^{3/2}).
		\end{aligned}
		\]
		Repeating the same expansion for $k=2,3,4$ shows that
		\[
		|I_1| = O(\Delta t^{3/2}).
		\]
		Thus for interior nodes, we conclude that
		\[
		|e_{i,j}^n|
		\le \|e^{n+1}\|_\infty + O(\Delta t^{3/2}).
		\]
		
		Turning to boundary nodes, we consider the representative case $i=M_1,1\leq j\leq M_2-1$; all other boundary nodes are handled similarly:
		\[
		|e_{M_1,j}^n|
		\le 
		|f_{h,M_1-1,j}^n - f(x_{M_1-1},y_j,t_n)|
		+
		|f(x_{M_1-1},y_j,t_n)-f(x_{M_1},y_j,t_n)|
		=:T_1+T_2.
		\]
		The first term satisfies
		\[
		T_1 \le \|e^{n+1}\|_\infty + O(\Delta t^{3/2}),
		\]
		and by a Taylor expansion plus Neumann condition,
		\[
		T_2 = O(h_1^2) = O(\Delta t^2).
		\]
		Hence
		\[
		|e_{M_1,j}^n|
		\le \|e^{n+1}\|_\infty + O(\Delta t^{3/2}),
		\]
		and the same estimate holds on all boundaries.
		
		Finally, we establish the global estimate. With $\Delta t=h_1=h_2$,
		\[
		\|e^n\|_\infty \le \|e^{n+1}\|_\infty + O(\Delta t^{3/2}).
		\]
		Iterating from $e^N=0$ gives
		\[
		\|e^n\|_\infty \le O(N\Delta t^{3/2}) = O(\Delta t^{1/2}),
		\]
		which completes the proof.
	\end{proof}
	\subsection{Periodic Boundary Conditions}
	\label{subsec:periodic}
	
	We now consider periodic boundary conditions.
	Let $\Omega = (x_0,x_{M_1}) \times (y_0,y_{M_2})$ denote the computational domain, and define the spatial periods
	\[
	L_x := x_{M_1}-x_0, \qquad L_y := y_{M_2}-y_0.
	\]
	The solution $f$ and all PDE coefficients are assumed to be defined on $\mathbb{R}^2$ and to be doubly periodic with respect to these periods.  
	That is, for all $(x,y,t)\in\mathbb{R}^2\times[0,T]$ and all integers $m,n$,
	\[
	f(x + mL_x,\, y,\, t) = f(x,y,t), 
	\qquad
	f(x,\, y + nL_y,\, t) = f(x,y,t),
	\]
	and the same periodicity holds for $A$, $B$, and $r$ in \eqref{eq:PDE-periodic}.  
	The terminal data $\varphi$ satisfies the same periodicity, and all coefficient assumptions and notation remain the same as in the previous subsections.
	
	Under periodicity, the PDE is naturally posed on the 2D torus 
	$\mathbb{T}^2 := \mathbb{R}^2 / (L_x\mathbb{Z} \times L_y\mathbb{Z})$:
	
	\begin{equation}  \label{eq:PDE-periodic}
		\begin{cases}  
			\partial_t f + \frac{1}{2}\mathrm{Tr}(AA^\top D^2 f) + B\cdot\nabla f - r(x,y,t)f = 0, 
			& (x,y)\in\mathbb{R}^2,\ t\in[0,T), \\
			f(x,y,T) = \varphi(x,y), & (x,y)\in\mathbb{R}^2
		\end{cases}
	\end{equation}
	with $f$ understood as a periodic extension on $\mathbb{R}^2$.
	
	In the discrete approximation, any branch endpoint that leaves $\Omega$ must be wrapped 
	back into the fundamental cell via periodicity.  
	Define the wrapping operator $\mathcal{W}: \mathbb{R}^2\to\Omega$ by
	\begin{equation}
		\label{wrap}
		\mathcal{W}(x,y)
		=
		\bigl(x_0 + \{(x-x_0)\bmod L_x\},\ 
		y_0 + \{(y-y_0)\bmod L_y\}\bigr),
	\end{equation}
	For each grid node $(x_i,y_j,t_n)$, compute the four  endpoints by \eqref{pr}
	\[
	(x_k^*, y_k^*), \qquad k=1,\dots,4,
	\]
	and then apply periodic wrapping:
	\[
	(X_k^h,Y_k^h) = \mathcal{W}(x_k^*, y_k^*), \qquad k=1,\dots,4.
	\]
	Because periodicity eliminates boundary stopping, all branches share the next discrete time level 
	$t_{n+1}$, and the one-step update becomes
	\begin{equation}
		\label{eq:periodic-update}
		f_{h,i,j}^n
		=
		\frac{1}{4(1+r_{i,j}^n\Delta t)}
		\sum_{k=1}^4 
		\mathcal{L}^{\mathrm{sp}} f_h(X_k^h,Y_k^h, t_{n+1}).
	\end{equation}
	
	\noindent
	\textbf{Algorithm 4 (Periodic Boundary Scheme for \eqref{eq:PDE-periodic}):}
	\begin{enumerate}
		\item \textbf{Initialization.} Set \(f_{h,i,j}^N = \varphi(x_i, y_j)\) for all grid nodes.
		\item For \(n = N-1, \ldots, 0\):
		\begin{enumerate}
			\item For \(i = 0, \ldots, M_1,\; j = 0, \ldots, M_2\): 
			\begin{enumerate}
				\item For \(k = 1, \ldots, 4\):
				\begin{enumerate}
					\item Compute proposed position \((x_k^*, y_k^*)\) by \eqref{pr} at \(t_{n+1}\).
					\item Wrap to \((X_k^h,Y_k^h) = \mathcal{W}(x_k^*, y_k^*)\) by \eqref{wrap}.
					\item Evaluate \(U_k = \mathcal{L}^{\mathrm{sp}} f_h(X_k^h,Y_k^h, t_{n+1})\).
				\end{enumerate}
				\item Update \(f_{h,i,j}^n = \dfrac{1}{4(1 + r_{i,j}^n \Delta t)} \sum_{k=1}^4 U_k\).
			\end{enumerate}
		\end{enumerate}
	\end{enumerate}
	
	\begin{figure}[htbp]
		\centering
		\includegraphics[width=0.8\textwidth]{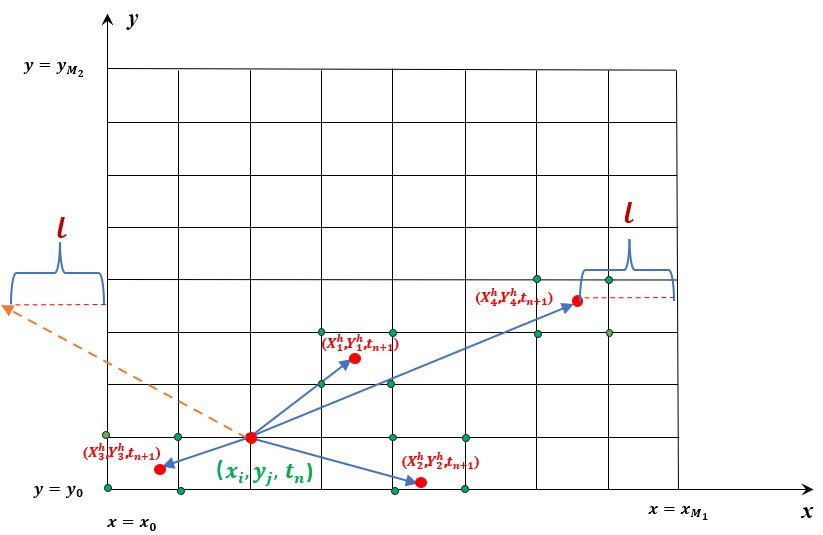}
		\caption{Illustration of Algorithm 4. Here, the periodic condition is applied to \(X_4^h\).}
		\label{fig4}
	\end{figure}
	
	Since each update in \eqref{eq:periodic-update} forms a nonnegative linear combination of periodically interpolated values, with the sum of coefficients equal to $\frac{1}{1+r^{n}_{i,j}\Delta t} \leq 1$, the scheme preserves both linearity and positivity, and is uniformly stable in the \(L^\infty\) norm.
	
	\newpage
	
	\begin{theorem}[Improved Convergence under Periodicity]
		\label{thm:periodic-convergence}
		Assume the exact solution \(f\) is smooth and spatially periodic on \(\Omega\). With \(\Delta t = h_1 = h_2\), there exists a constant \(C > 0\), independent of mesh sizes, such that the numerical solution from Algorithm 3 satisfies
		\[
		\|e^n\|_\infty \le O( \Delta t).
		\]
	\end{theorem}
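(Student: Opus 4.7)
The main observation is that under periodic boundary conditions the periodic wrapping $\mathcal{W}$ eliminates boundary stopping entirely: for every grid node $(i,j)$ and every branch $k$, we have $\tau_k = t_{n+1}$ and hence $\hat\tau_k = \Delta t$. The formulas \eqref{eq:omega} then collapse to $\omega_1=\omega_2=\omega_3=\omega_4=1/4$, so \eqref{eq:periodic-update} is a fully symmetric four-branch update on the torus $\mathbb{T}^2$. This places the analysis in the favorable regime that, in the Dirichlet case, was only available for nodes whose branches did not hit $\partial\Omega$.

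Following the decomposition used in the proofs of Theorems \ref{thm:compact-conv} and \ref{thm:symmetric-conv}, I would split the grid-point error as $e^n_{i,j}=I_1+I_2+I_3$, where $I_1$ is the truncation error obtained by evaluating the exact solution at the wrapped branch endpoints, $I_2$ is the contribution of the discrete error $f_h-f$ propagated through $\mathcal{L}^{\mathrm{sp}}$, and $I_3$ is the bilinear interpolation error of the exact solution. Since $f$ is smooth and periodic on $\mathbb{T}^2$, standard interpolation estimates give $|I_3|=O(h_1^2+h_2^2)=O(\Delta t^2)$ uniformly in space---periodicity is crucial here because it removes the boundary-induced degradation of the interpolation order that complicates the Dirichlet case. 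Stability of $\mathcal{L}^{\mathrm{sp}}$ together with $(1+r^n_{i,j}\Delta t)^{-1}\le 1$ yields the standard bound $|I_2|\le\|e^{n+1}\|_\infty$.

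The crux of the argument, and the step I expect to require the most care, is sharpening the estimate of $I_1$ from the $O(\Delta t^{3/2})$ of Lemma \ref{a} to $O(\Delta t^2)$. With $\omega_k=1/4$ and $\hat\tau_k=\Delta t$, a Taylor expansion of $f(X_k^h,Y_k^h,t_{n+1})$ about $(x_i,y_j,t_n)$---using that $f(\mathcal{W}(x^*_k,y^*_k),t)=f(x^*_k,y^*_k,t)$ by periodicity---produces $O(\Delta t^{3/2})$ remainder terms built from odd powers of the Rademacher increments multiplied by $\Delta t$-type factors; the uniform symmetric average over the four branches annihilates these odd moments exactly. The surviving second-order moments reconstruct the infinitesimal generator, and using the PDE \eqref{eq:PDE-periodic} together with $(1+r^n_{i,j}\Delta t)^{-1}=1-r^n_{i,j}\Delta t+O(\Delta t^2)$ cancels the $O(\Delta t)$ contribution, leaving $|I_1|=O(\Delta t^2)$. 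Combining the three estimates gives the one-step inequality $|e^n_{i,j}|\le\|e^{n+1}\|_\infty+O(\Delta t^2)$, and iterating backward from $e^N\equiv 0$ across $N=T/\Delta t$ levels yields $\|e^n\|_\infty\le N\cdot O(\Delta t^2)=O(\Delta t)$.
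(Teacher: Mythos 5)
Your proposal is correct and follows essentially the same route as the paper's (sketched) proof: no boundary stopping under periodic wrapping forces $\tau_k=t_{n+1}$ and $\omega_k=1/4$, the symmetric four-branch average cancels the odd-moment $O(\Delta t^{3/2})$ terms so that $|I_1|=O(\Delta t^2)$, and combining with the $O(h_1^2+h_2^2)$ interpolation error and backward iteration gives $O(\Delta t)$. In fact you supply more detail than the paper does, in particular the use of $f\circ\mathcal{W}=f$ by periodicity and the expansion of $(1+r_{i,j}^n\Delta t)^{-1}$ needed to absorb the reaction term.
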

	
	\begin{proof}[Proof sketch]
		Under periodic boundary conditions, branches never encounter boundary stopping, ensuring all discrete stopping times equal \(t_{n+1}\). Consequently, the local quadrature remainder \(I_1\)—previously \(O(\Delta t^{3/2})\) due to asymmetric partial stops—now reduces to higher order: Taylor expansion of all four branch contributions around \((x_i, y_j, t_n)\) shows exact cancellation of odd spatial moments and matching of second-order moments with diffusion coefficients, yielding quadrature error \(O(\Delta t^2)\). Combined with spatial interpolation error \(O(h_1^2 + h_2^2)\), the local truncation error becomes:
		\[
		R_{\mathrm{loc}} = O(\Delta t^2) + O(h_1^2 + h_2^2).
		\]
		Global error scales as \(R_{\mathrm{loc}}/\Delta t\), so with \(\Delta t = h_1 = h_2\), we obtain first-order global error \(O(\Delta t)\). 
	\end{proof}
	\section{Discussion}
	\label{sem}
	Consider the linear parabolic equation in \eqref{eq:PDE-backward} with the diffusion matrix  
	\[
	AA^{\top}= \begin{pmatrix}
		\sigma_1^2 & \rho\sigma_1\sigma_2 \\[2pt]
		\rho\sigma_1\sigma_2 & \sigma_2^2
	\end{pmatrix},
	\]  
	where \(A=(A_1,\dots,A_P)\) is a \(2\times P\) matrix whose columns are given by \(A_i=(A_{i_1},A_{i_2})^{\!\top}\) for \(i=1,\dots,P\).
	
	A common approach to discretizing such problems is the semi-Lagrangian (SL) method{\cite{bib4}}. Using the fifth type of approximation from section 5.1 of the original text, the advection and diffusion terms can be discretized as
	\begin{equation}
		\begin{aligned}
			L_k[\phi](t, x, y) =& \sum_{i=1}^{P-1} \frac{\phi(\Delta x_{i_1}, \Delta y_{i_2}) - 2\phi(x, y) + \phi(\Delta x_{-i_1}, \Delta y_{-i_2})}{2k^2}\\
			& + \frac{\phi(\Delta x_{P_1}, \Delta y_{P_2}) - 2\phi(x, y) + \phi(\Delta x_{-P_1}, \Delta y_{-P_2})}{2k^2},
		\end{aligned}
	\end{equation}
	where
	\[
	\begin{aligned}
		\Delta x_{i_1} &= x + k A_{i_1}, & \Delta y_{i_2} &= y + k A_{i_2},\\
		\Delta x_{-i_1} &= x - k A_{i_1}, & \Delta y_{-i_2} &= y - k A_{i_2},\\
		\Delta x_{P_1} &= x + k A_{P_1} + k^2 b_1, & \Delta y_{P_2} &= y + k A_{P_2} + k^2 b_2,\\
		\Delta x_{-P_1} &= x - k A_{P_1} + k^2 b_1, & \Delta y_{-P_2} &= y - k A_{P_2} + k^2 b_2.
	\end{aligned}
	\]
	Combining this approximation with the spatial grid and applying an interpolation operator \(\mathcal{L}\)(linear in the LISL variant, or higher‑order and solution‑dependent in nonlinear MPCSL)gives a discrete scheme. Employing a \(\theta\)-scheme in time leads to the complete SL update
	\begin{equation}
		\delta_{\Delta t} U_{i,j}^n + L_k[\mathcal{L} \overline{U}^{\theta,n+1}]_{i,j}^{n+1-\theta} - r_{i,j}^{n+1-\theta} \overline{U}_{i,j}^{\theta,n+1}=0,
	\end{equation}
	where
	\[
	U_{i,j}^n \approx u(t_n, x_i, y_j),\quad \delta_{\Delta t} U_{i,j}^n = \frac{U_{i,j}^{n+1} - U_{i,j}^{n}}{\Delta t},\quad \overline{U}^{\theta,n+1} = (1-\theta)U^{n+1} + \theta U^n.
	\]
	For monotonicity and $L^\infty$ stability, the  LISL scheme  must satisfy the CFL condition
	\[
	(1 - \theta) \Delta t \left[ \frac{P}{k^2} + r_{i,j}^{n+1-\theta} \right] \leq 1, \quad \forall i,j,n.
	\]
	
	When no values outside the computational domain are required, the error of LISL scheme is
	\[
	O\left( |1 - 2\theta|\Delta t + \Delta t^2 + k^2 + \frac{\Delta h^2}{k^2} \right),
	\]
	from which it can  be seen that spatial convergence typically requires \( k = O(\Delta h^{1/2}) \). From the CFL condition, we know that when \( \theta < 1 \), \( \Delta t \) must satisfy \( \Delta t = O(k^2) = O(h) \). Although this is less restrictive than the \( \Delta t = O(\Delta h^2) \) limitation of classical explicit parabolic schemes, it still imposes a conditional stability constraint.
	Moreover, a major practical challenge arises near boundaries: due to its wide stencil, the Semi-Lagrangian (SL) discretization often extends beyond the computational domain \( \Omega \). For Dirichlet boundary conditions, this requires extrapolating values from outside the domain, which can degrade accuracy and potentially violate monotonicity near the boundary.
	.

	The expectation‑based method proposed in this work differs from the SL approach in three fundamental aspects:
	\begin{enumerate}
		\item \textbf{Different Boundary Handling:} The SL method requires boundary extrapolation, which often compromises not only precision but also the preservation of monotonicity. In contrast, our method handles boundaries natively, preserving both accuracy and monotonicity without the need for extrapolation.
		
		\item \textbf{Stability and Monotonicity Conditions.} The explicit LISL method (with $\theta=0$) requires a CFL condition to ensure both monotonicity and $L^{\infty}$ stability. In contrast, our method, which is derived from the Feynman-Kac formula, is unconditionally $L^{\infty}$ stable and naturally preserves monotonicity. It is noteworthy that both methods adopt the scaling $\mathcal{O}(\Delta t) = \mathcal{O}(h)$: for the LISL method, this scaling is necessary to satisfy stability and consistency requirements, while for our method, it is adopted primarily to maintain consistency.
		
		\item \textbf{Different Truncation Errors:} Away from the boundaries, the truncation error of our method is $O(\Delta t)$. In contrast, the truncation error of the LISL method away from the boundaries is $O\left( |1 - 2\theta|\Delta t + \Delta t^2 + k^2 + \frac{\Delta h^2}{k^2} \right)$.
	\end{enumerate}

	Overall, compared to the SL approach, our method offers a more robust framework:
	it avoids accuracy and monotonicity loss at boundaries by design, and it removes the
	CFL constraint for monotonicity and $L^{\infty}$ stable through its Feynman-Kac foundation, resulting in
	an unconditionally stable and monotone method.
	\section{Numerical Results}
	\label{sec4}
	In this section, we present numerical experiments to verify the convergence properties of the schemes proposed in Section~\ref{sec2}. We report the maximum error and $L^2$  error at the initial time $t=0$:
	\[
	\text{Error}_{L^\infty}
	= \max_{0\le i\le M_1}\max_{0\le j\le M_2} 
	\big| f_{h,i,j}^0 - f(x_i,y_j,0) \big|,
	\]
	\[
	\text{Error}_{L^2}
	= \left( h_1 h_2 \sum_{i=0}^{M_1}\sum_{j=0}^{M_2}
	\big| f_{h,i,j}^{0} - f(x_i,y_j,0) \big|^2 \right)^{1/2}.
	\]
	
	The empirical convergence rate is measured under successive mesh refinement:
	\[
	\text{Rate}=
	\frac{\log\!\left( \text{Error}(h_1) / \text{Error}(h_2) \right)}
	{\log(h_1/h_2)},
	\]
	where $\text{Error}(h)$ denotes the error corresponding to the spatial step size $h$.

	We begin with Dirichlet boundary conditions and compare  Algorithm 1 and 2  with the explicit ($\theta=0$) LISL method described in Section~\ref{sem}, using two boundary treatments: Exact and Extrapolation. We subsequently examine performance under homogeneous Neumann and periodic boundary conditions.
	\subsection{Dirichlet Boundary Conditions}
	
	We solve the backward parabolic problem in $\Omega=(0,1)^2$ with terminal time $T=1$:
	\[i
	\begin{cases}
		\partial_t f
		+\dfrac12\sigma_1^2 f_{xx}
		+\dfrac12\sigma_2^2 f_{yy}
		+\rho\sigma_1\sigma_2 f_{xy}
		- r f = 0,
		& (x,y)\in\Omega,\; t\in[0,1),\\[4pt]
		f(x,y,1)=e^{1+x+y},&(x,y)\in\Omega\\[4pt]
		f|_{\partial\Omega}(x,y,t)=e^{t+x+y}, &t\in[0,1),
	\end{cases}
	\]
	where
	\[
	\sigma_1^2=x^2y^2t^2,\qquad
	\sigma_2^2=(xyt+1)^2,\qquad
	\rho=1,\qquad
	r=1+\dfrac12x^2y^2t^2+\dfrac12(xyt+1)^2+xyt(xyt+1).
	\]
	
	The exact solution is $f(x,y,t)=e^{t+x+y}$.  
	The associated covariance matrix 
	\[
	AA^\top=\begin{pmatrix}
		x^2y^2t^2 & xyt(xyt+1)\\
		xyt(xyt+1) & (xyt+1)^2
	\end{pmatrix}
	\]
	is not diagonally dominant, placing the model in the class of mixed-derivative anisotropic diffusion problems for which compact positivity-preserving nine-point schemes typically fail. This motivates our non-compact expectation-based approach.

	We compare Algorithms 1 and 2 with the explicit semi-Lagrangian (LISL) scheme of Section~\ref{sem}, which employs directional splitting and linear interpolation for off-grid evaluations. For stability, we use $\Delta t = h/4$.
	
	\textbf{Boundary treatments.}
	Due to stencil non-compactness, LISL evaluation points may fall outside $\Omega$. We test:
	
	\begin{itemize}
		\item \textbf{Exact}:  
		Outside points are assigned the exact value $e^{t+x+y}$.  
		This can achieve the theoretical convergence order but is not practical.
		
		\item \textbf{Extrapolation}:  
		For a point $(x_q, y_q)$ outside $\Omega$, we first compute  indices:
		$i_0 = (x_q - x_{0})/h_1 + 1$ and $j_0 = (y_q - y_{0})/h_2 + 1$. 
		Then we clamp the indices to the nearest boundary cell by setting 
		$i = \min(\max(\lfloor i_0 \rfloor, 1), M_1-1)$ and $j = \min(\max(\lfloor j_0 \rfloor, 1), M_2-1)$ interpolation at $(x_q, y_q)$ using the four grid nodes 
		$(x_i, y_j), (x_{i+1}, y_j), (x_i, y_{j+1}), (x_{i+1}, y_{j+1})$. 
	\end{itemize}

	Algorithms 1 and 2 use $\Delta t=h_1=h_2$.  
	The LISL method uses $h_1=h_2=h$,$\Delta t=h/4$.  
	Errors at $t=0$ are summarized in Tables~\ref{tab:dirichlet_explicit}--\ref{tab:lisl_extrapolation}.
	
	\begin{table}[htb]
		\centering
		\caption{Algorithm 1: $L^\infty$ and $L^2$ errors at $t=0$ and convergence rates for the Dirichlet problem.}
		\label{tab:dirichlet_explicit}
		\small
		\begin{tabular}{ccccccc}
			\hline
			$M_1$ & $M_2$ & $N$ & $\text{Error}_{L^{\infty}}$ & $\text{Rate}_{L^{\infty}}$ & $\text{Error}_{L^{2}}$ & $\text{Rate}_{L^{2}}$ \\
			\hline
			20  & 20  & 20  & $1.1733 \times 10^{-1}$ & ---    & $6.9700 \times 10^{-2}$ & ---    \\
			40  & 40  & 40  & $6.0217 \times 10^{-2}$ & 0.9623 & $3.5784 \times 10^{-2}$ & 0.9618 \\
			80  & 80  & 80  & $3.0549 \times 10^{-2}$ & 0.9791 & $1.8132 \times 10^{-2}$ & 0.9807 \\
			160 & 160 & 160 & $1.5381 \times 10^{-2}$ & 0.9900 & $9.1251 \times 10^{-3}$ & 0.9907 \\
			320 & 320 & 320 & $7.7104 \times 10^{-3}$ & 0.9963 & $4.5758 \times 10^{-3}$ & 0.9958 \\
			640 & 640 & 640 & $3.8571 \times 10^{-3}$ & 0.9993 & $2.2905 \times 10^{-3}$ & 0.9984 \\
			\hline
		\end{tabular}
	\end{table}
	
	\begin{table}[htb]
		\centering
		\caption{ Algorithm 2: $L^\infty$ and $L^2$ errors at $t=0$ and convergence rates for the Dirichlet problem.}
		\label{tab:dirichlet_symmetric}
		\small
		\begin{tabular}{ccccccc}
			\hline
			$M_1$ & $M_2$ & $N$ & $\text{Error}_{L^{\infty}}$ & $\text{Rate}_{L^{\infty}}$ & $\text{Error}_{L^{2}}$ & $\text{Rate}_{L^{2}}$ \\
			\hline
			20  & 20  & 20  & $1.1084 \times 10^{-1}$ & ---    & $6.8080 \times 10^{-2}$ & ---    \\
			40  & 40  & 40  & $5.7351 \times 10^{-2}$ & 0.9506 & $3.5087 \times 10^{-2}$ & 0.9563 \\
			80  & 80  & 80  & $2.9344 \times 10^{-2}$ & 0.9668 & $1.7851 \times 10^{-2}$ & 0.9749 \\
			160 & 160 & 160 & $1.4901 \times 10^{-2}$ & 0.9777 & $9.0165 \times 10^{-3}$ & 0.9854 \\
			320 & 320 & 320 & $7.5254 \times 10^{-3}$ & 0.9855 & $4.5352 \times 10^{-3}$ & 0.9914 \\
			640 & 640 & 640 & $3.7876 \times 10^{-3}$ & 0.9905 & $2.2756 \times 10^{-3}$ & 0.9949 \\
			\hline
		\end{tabular}
	\end{table}
	
	\begin{table}[htb]
		\centering
		\caption{LISL-Exact scheme: $L^\infty$ errors at $t=0$ and convergence rates for the Dirichlet problem.  
			\emph{Note:} Explicit method ($\theta=0$), spatial step $h=1/M$, time step $\Delta t=h/4$.}
		\label{tab:lisl_exact}
		\small
		\begin{tabular}{ccccc}
			\hline
			$M$ & $h$ & $\Delta t$ & $\text{Error}_{L^{\infty}}$ & $\text{Rate}_{L^{\infty}}$ \\
			\hline
			20  & 0.05       & 0.0125      & $5.4703 \times 10^{-2}$ & ---    \\
			40  & 0.025      & 0.00625     & $2.8070 \times 10^{-2}$ & 0.9626 \\
			80  & 0.0125     & 0.003125    & $1.4617 \times 10^{-2}$ & 0.9414 \\
			160 & 0.00625    & 0.0015625   & $7.3556 \times 10^{-3}$ & 0.9908 \\
			320 & 0.003125   & 0.00078125  & $3.6808 \times 10^{-3}$ & 0.9988 \\
			640 & 0.0015625  & 0.000390625 & $1.8327 \times 10^{-3}$ & 1.0060 \\
			\hline
		\end{tabular}
	\end{table}
	
	\begin{table}[htb]
		\centering
		\caption{LISL-Extrapolation scheme: $L^\infty$ errors at $t=0$ and convergence rates for the Dirichlet problem.  
			\emph{Note:} Explicit method ($\theta=0$), spatial step $h=1/M$, time step $\Delta t=h/4$.}
		\label{tab:lisl_extrapolation}
		\small
		\begin{tabular}{ccccc}
			\hline
			$M$ & $h$ & $\Delta t$ & $\text{Error}_{L^{\infty}}$ & $\text{Rate}_{L^{\infty}}$ \\
			\hline
			20  & 0.05       & 0.0125      & $8.695 \times 10^{-1}$ & ---    \\
			40  & 0.025      & 0.00625     & $6.773 \times 10^{-1}$ & 0.360  \\
			80  & 0.0125     & 0.003125    & $5.338 \times 10^{-1}$ & 0.343  \\
			160 & 0.00625    & 0.0015625   & $4.039 \times 10^{-1}$ & 0.402  \\
			320 & 0.003125   & 0.00078125  & $3.045 \times 10^{-1}$ & 0.408  \\
			640 & 0.0015625  & 0.000390625 & $2.214 \times 10^{-1}$ & 0.460  \\
			\hline
		\end{tabular}
	\end{table}

	\newpage

	The results presented in Tables~\ref{tab:dirichlet_explicit}--\ref{tab:lisl_extrapolation} demonstrate the following:
	
	\textbf{1. Convergence performance.}
	Algorithms 1 and 2 both exhibit near first-order convergence. Algorithm 2 aligns with the theoretical prediction of Theorem~\ref{thm:symmetric-conv}, while Algorithm 1 performs better than the conservative $O(\Delta t^{1/2})$ estimate of Theorem~\ref{thm:compact-conv}. This improved behavior occurs because, for most interior points, all branches share the same stopping time $t_{n+1}$, allowing lower-order error terms to cancel.
	
	In comparison, the LISL--Exact method also achieves near first-order accuracy, which is consistent with theoretical expectations. However, it relies on knowing the exact solution outside the domain and is therefore impractical. The LISL--Extrapolation variant, which uses bilinear extrapolation at the boundary, performs markedly worse, with convergence rates around $0.4$---clearly illustrating how  extrapolation  degrades the accuracy of semi-Lagrangian schemes.
	
	\textbf{2. Boundary treatment.}
	Algorithms 1 and 2 naturally incorporate boundary interactions through their stopping time mechanism and the adaptive probabilities defined in \eqref{eq:omega}. Consequently, they require neither extrapolation nor exact exterior values, and they maintain  accuracy and pomonotonicity  up to the boundary.
	
	By contrast, the semi-Lagrangian method faces an inherent trade-off: it either depends on impractical exact boundary data or suffers from poor convergence when extrapolation is employed. This limitation underscores a structural difficulty of LISL-type schemes in handling boundaries accurately.
	\medskip
	
	In summary, the expectation-based schemes proposed here display much more robust boundary handling than the semi-Lagrangian approach. By avoiding both extrapolation and the need for exact boundary information, they offer a more reliable and practical choice for anisotropic diffusion problems.

	\subsection{Parabolic Problem with Second-Type (Homogeneous Neumann) Boundary Conditions}
	\label{subsec:numerics-reflective}
	
	We test the reflective (Neumann) implementation on the exact solution
	\[
	f(x,y,t) = e^{t} \sin\left(\pi x - \frac{\pi}{2}\right) \sin\left(\pi y - \frac{\pi}{2}\right), \qquad (x,y) \in [0,1]^2, \; t \in [0,1],
	\]
	solving backward the PDE:
	\begin{equation*} 
		\label{eq:exp-reflective}
		\begin{cases}
			\displaystyle
			\partial_t f + \frac{1}{2} \sigma_1^2 \partial_{xx} f + \frac{1}{2} \sigma_2^2 \partial_{yy} f + \rho \sigma_1 \sigma_2 \partial_{xy} f  - r f = 0,&(x,y)\in\Omega,t\in[0,1) \\[6pt]
			f(x,y,1) = e \sin\left(\pi x - \frac{\pi}{2}\right) \sin\left(\pi y - \frac{\pi}{2}\right),&(x,y)\in\Omega \\[4pt]
			\dfrac{\partial f}{\partial n}=0 ,&(x,y)\in\partial\Omega,,t\in[0,1),
		\end{cases}
	\end{equation*}
	where $\Omega=(0,1)^2$ and the coefficients are defined as:
	\[
	\begin{aligned}
		\sigma_1^2 &= \dfrac{x^2}{4\pi^2}, \\
		\sigma_2^2 &= \dfrac{\sin(\pi x - \frac{\pi}{2})^4 \sin(\pi y - \frac{\pi}{2})^4}{4\pi^2}, \\
		\rho &= 1, \\
		r &= 1 - \frac{1}{8}x^2- \frac{1}{8}\sin^4\left(\pi x - \frac{\pi}{2}\right) \sin^4\left(\pi y - \frac{\pi}{2}\right)\\&\qquad + \frac{1}{4}x \sin\left(\pi x - \frac{\pi}{2}\right) \sin\left(\pi y - \frac{\pi}{2}\right) \cos\left(\pi x - \frac{\pi}{2}\right) \cos\left(\pi y - \frac{\pi}{2}\right).
	\end{aligned}
	\]
	The associated covariance matrix is
	\[
	A A^\top = \
	\begin{pmatrix}
		\dfrac{x^2}{4\pi^2} &\dfrac{x\sin(\pi x - \frac{\pi}{2})^2 \sin(\pi y - \frac{\pi}{2})^2}{4\pi^2} \\[2pt]
		\\
		\dfrac{x\sin(\pi x - \frac{\pi}{2})^2 \sin(\pi y - \frac{\pi}{2})^2}{4\pi^2} &\dfrac{\sin(\pi x - \frac{\pi}{2})^4 \sin(\pi y - \frac{\pi}{2})^4}{4\pi^2}
	\end{pmatrix},
	\]
	Table~\ref{tab:reflective} shows the \(L^\infty\) errors at \(t = 0\) obtained by the reflective algorithm (Algorithm 3).
	
	\begin{table}[htb]
		\centering
		\caption{$L^{\infty}$ error and $L^2$ error with convergence rates at $t=0$ for the reflective Neumann problem (Algorithm 3).}
		\label{tab:reflective}
		\small
		\begin{tabular}{@{}ccccccc@{}}
			\toprule
			\(M_1\) & \(M_2\) & \(N\) &$\text{Error}_{L^\infty}$ & $\text{Rate}_{L^\infty}$ & $\text{Error}_{L^2}$ & $\text{Rate}_{L^2}$ \\
			\midrule
			20  & 20  & 20  & \(8.1891 \times 10^{-2}\) & --- & \(2.2530 \times 10^{-2}\) & --- \\
			40  & 40  & 40  & \(3.5314 \times 10^{-2}\) & 1.2135 & \(8.2544 \times 10^{-3}\) & 1.4486 \\
			80  & 80  & 80  & \(9.7472 \times 10^{-3}\) & 1.8572 & \(3.2139 \times 10^{-3}\) & 1.3608 \\
			160 & 160 & 160 & \(5.0892 \times 10^{-3}\) & 0.9376 & \(1.6119 \times 10^{-3}\) & 0.9956 \\
			320 & 320 & 320 & \(2.1656 \times 10^{-3}\) & 1.2327 & \(6.8456 \times 10^{-4}\) & 1.2355 \\
			640 & 640 & 640 & \(1.0320 \times 10^{-3}\) & 1.0693 & \(3.5296 \times 10^{-4}\) & 0.9557 \\
			\bottomrule
		\end{tabular}
	\end{table}
	
	The observed rates for the reflective test exceed the conservative theoretical bound in some grid regimes; this is consistent with the fact that for most interior nodes the four branches do not hit the boundary and therefore attain higher local accuracy.
	
	\subsection{Periodic Boundary Conditions}
	\label{subsec:numerics-periodic}
	
	The exact solution  
	\[
	f(x, y, t) = e^{t} \sin(2\pi x) \sin(2\pi y)
	\]  
	is defined on $\mathbb{R}^2$, with the computational domain taken as the unit square \([0, 1] \times [0, 1]\). Periodic boundary conditions (with periods \(L_x = L_y = 1\)) are applied, and the time interval is \([0, 1]\).
	which satisfies the backward PDE:
	\begin{equation*} 
		\label{eq:exp-periodic}
		\begin{cases}
			\displaystyle
			\partial_t f + \frac{1}{2} \sigma_1^2 \partial_{xx} f + \frac{1}{2} \sigma_2^2 \partial_{yy} f + \rho \sigma_1 \sigma_2 \partial_{xy} f + b_1 \partial_x f + b_2 \partial_y f - r f = 0,&(x,y)\in\mathbb{R}^2, t\in[0,1) \\[6pt]
			f(x,y,1)=e\sin{(2\pi x)}\sin{(2\pi y)},&(x,y)\in\mathbb{R}^2.
		\end{cases}
	\end{equation*}
	with periodic boundary conditions and coefficients defined as:
	\[
	\begin{aligned}
		&b_1(x,y) = \frac{1}{2\pi}\sin(2\pi x) \cos(2\pi y), \\
		&b_2(x,y) = -\frac{1}{2\pi}\sin(2\pi y) \cos(2\pi x), \\
		&\sigma_1^2(x,y) = \frac{1}{16\pi^2} \cos(2\pi x)^4 \cos(2\pi y)^4, \\
		&\sigma_2^2(x,y) = \frac{1}{16\pi^2} \sin(2\pi x)^4 \sin(2\pi y)^4, \\
		&\rho = 1, \\
		&r(x,y) = 1 -\frac18 \cos^4(2\pi x) \cos^4(2\pi y) -\frac18 \sin^4(2\pi x) \sin^4(2\pi y)\\&\qquad\qquad +\frac14 \sin(2\pi x) \sin(2\pi y) \cos(2\pi x)^3 \cos(2\pi y)^3.
	\end{aligned}
	\]
	The associated covariance matrix is
	\[
	A A^\top =
	\begin{pmatrix}
		\dfrac{1}{16\pi^2} \cos(2\pi x)^4 \cos(2\pi y)^4 & \dfrac{1}{(16\pi)^2} \sin(4\pi x)^2 \sin(4\pi y)^2\\[2pt]
		\\
		\dfrac{1}{(16\pi)^2} \sin(4\pi x)^2 \sin(4\pi y)^2 & \dfrac{1}{16\pi^2} \sin(2\pi x)^4 \sin(2\pi y)^4
	\end{pmatrix},
	\]
	and the  matrix is not diagonally dominant. Thus, this periodic test likewise represents an anisotropic mixed-derivative diffusion case.
	
	The periodic run errors at \(t = 0\) are given in Table~\ref{tab:periodic_consistent}.
	
	\begin{table}[htb]
		\centering
		\caption{$L^{\infty}$ error and $L^2$ error with convergence rates at $t=0$ for the periodic problem (Algorithm 4).}
		\label{tab:periodic_consistent}
		\small
		\begin{tabular}{@{}ccccccc@{}}
			\toprule
			\(M_1\) & \(M_2\) & \(N\) &$\text{Error}_{L^\infty}$ & $\text{Rate}_{L^\infty}$ & $\text{Error}_{L^2}$ & $\text{Rate}_{L^2}$ \\
			\midrule
			20  & 20  & 20  & \(1.3661 \times 10^{-1}\) & --- & \(7.4054 \times 10^{-2}\) & --- \\
			40  & 40  & 40  & \(7.2895 \times 10^{-2}\) & 0.9061 & \(4.1730 \times 10^{-2}\) & 0.8275 \\
			80  & 80  & 80  & \(3.4862 \times 10^{-2}\) & 1.0642 & \(2.0683 \times 10^{-2}\) & 1.0126 \\
			160 & 160 & 160 & \(1.7847 \times 10^{-2}\) & 0.9659 & \(1.0551 \times 10^{-2}\) & 0.9711 \\
			320 & 320 & 320 & \(9.1831 \times 10^{-3}\) & 0.9587 & \(5.4682 \times 10^{-3}\) & 0.9482 \\
			640 & 640 & 640 & \(4.8347 \times 10^{-3}\) & 0.9256 & \(2.8229 \times 10^{-3}\) & 0.9539 \\
			\bottomrule
		\end{tabular}
	\end{table}
	\newpage
	The periodic experiment exhibits empirical convergence close to first order on refined meshes, in agreement with Theorem~\ref{thm:periodic-convergence}.
	
	\section{Conclusion}
	\label{sec5}
	This paper has introduced a novel non-compact numerical framework for linear non-divergence form parabolic equations.Based on the Feynman–Kac formula, the solution is expressed as a conditional expectation of an associated diffusion process. By employing a wide stencil scheme approximation with positivity-preserving interpolation and adaptive probabilities, we obtain a scheme that inherently guarantees positivity and permits the favorable time-step scaling $\Delta t \sim h$, substantially relaxing the $\Delta t = O(h^2)$ restriction typical of explicit compact schemes.
	
	A key contribution lies in the systematic and accurate treatment of boundary conditions---a common challenge for non-compact methods. For Dirichlet boundaries, we developed two variants:  
	(i) a quad-tree non-uniform stopping time scheme with \(L^\infty\) error of order \(O(\Delta t^{1/2})\), and  
	(ii) a quad-tree uniform stopping time scheme that achieves the improved order \(O(\Delta t)\) through an implicit coupling that preserves an M-matrix structure.  
	For homogeneous Neumann conditions, boundary reflection is modeled via discrete specular reflection, yielding \(L^\infty\) convergence of order \(O(\Delta t^{1/2})\). Under periodic boundaries, modular wrapping together with periodic interpolation restores first-order accuracy, i.e., \(O(\Delta t)\).
	
	Comparisons with the explicit LISL scheme (\( \theta = 0 \)) under Dirichlet conditions highlight the distinct advantages of our framework. While both methods share a probabilistic interpretation, our method naturally incorporates boundary interactions through stopping times and adaptive probabilities, requiring neither extrapolation nor exact
	exterior values . In contrast, the LISL scheme either relies impractically on exact exterior values or may suffer from a loss of accuracy and monotonicity near the boundary when extrapolation is used. The experiments confirm that our approach maintains nearly first-order convergence and robust boundary accuracy without extrapolation, outperforming the LISL method in practical settings.
	
	Stability and positivity are ensured by construction because each update forms a nonnegative linear combination of conditionally expected values and positivity-preserving interpolation operators. All theoretical \(L^\infty\) convergence rates are confirmed numerically for Dirichlet, Neumann and periodic boundaries, demonstrating the method's reliability near domain edges.

	In summary, this research delivers a cohesive numerical methodology that integrates three important features:
	\begin{itemize}
		\item \textbf{Inherent positivity preservation} through a constructive probabilistic discretization.
		\item \textbf{Physics-compatible boundary treatments} for diverse condition types, avoiding accuracy-degrading extrapolation.
		\item \textbf{Rigorous \(L^\infty\) convergence guarantees} under the practical scaling \(\Delta t \sim h\), supported by numerical validation.
	\end{itemize}
	
	The framework therefore offers a robust, stable, and boundary-aware alternative for any anisotropic parabolic problems where classical compact schemes may fail to preserve positivity.
	
	\section*{Acknowledgments}
	
	This work was supported by the Natural Science Foundation of China (no. 12371401).
	
	\bibliographystyle{amsplain}
	
\end{document}